\newcommand{\Ext}{\operatorname{Ext}}
\newcommand{\Tor}{\operatorname{Tor}}
\newcommand{\Soc}{\operatorname{Soc}}
\newcommand{\Hom}{\operatorname{Hom}}
\newcommand{\Hilb}{\operatorname{Hilb}}
\newcommand{\HH}{\operatorname{H}}
\newcommand{\Coker}{\operatorname{Coker}}
\newcommand{\rank}{\operatorname{rank}}
\newcommand{\type}{\operatorname{type}}
\newcommand{\bd}{\boldsymbol}
\newcommand{\ov}{\overline}
\newcommand{\into}{\hookrightarrow}
\newcommand{\col}{\colon}
\newcommand{\xra}{\xrightarrow}
\newcommand{\lra}{\longrightarrow}
\newcommand{\dd}{\partial}
\newcommand{\m}{{\mathfrak m}}
\newcommand{\fm}{{\mathfrak m}}
\newcommand{\bw}{{\mathsf\Lambda}}
\theoremstyle{remark}
\theoremstyle{plain}
\newtheorem{theorem}{Theorem}[section]
\newtheorem{conjecture}[theorem]{Conjecture}
\newtheorem*{conjectureAR}{Conjecture (Auslander-Reiten)}
\newtheorem*{theoremAR}{{\rm \ref{AR}.}\;Theorem}
\newtheorem*{theoremm^2M}{{\rm \ref{m^2M}.}\;Theorem}
\newtheorem*{conj}{{\rm \ref{conjecture}.}\;Conjecture}
\newtheorem*{theoremomega}{{\rm \ref{omega}.}\;Theorem}
\newtheorem*{conjectureT}{Conjecture (Tachikawa)}
\newtheorem{proposition}[theorem]{Proposition}
\newtheorem{lemma}[theorem]{Lemma}
\theoremstyle{definition}
\newtheorem{chunk}[theorem]{}
\newtheorem{subchunk}{}
\newtheorem*{chunk*}{}
\newtheorem{example}[theorem]{Example}
\theoremstyle{remark}
\newtheorem*{Remark}{Remark}
\newtheorem{remark}[theorem]{Remark}
\numberwithin{equation}{theorem}
\numberwithin{subchunk}{theorem}
\begin{document} \title[Vanishing of Ext and Tor
]{Vanishing of Ext and Tor \\over Cohen-Macaulay
local rings}
\author[C.~Huneke]{Craig Huneke}
\address{Mathematics Department, University of Kansas, Lawrence, KS 66045}
\email{huneke@math.ukans.edu}

\author[L.~M.~\c Sega]{Liana M.~\c Sega}
\address{Mathematical Sciences Research Institute, 1000 Centennial Drive,
Berkeley, CA 94720}
\email{lsega@msri.org}

\author[A.~N.~Vraciu]{Adela N.~Vraciu}
\address{Mathematics Department, University of Kansas, Lawrence, KS 66045}
\email{avraciu@math.ukans.edu}
\date{\today} \maketitle

\section*{Introduction}

In recent years, there has been growing interest in understanding the
vanishing properties of Ext and Tor over Noetherian local rings,
especially in the case when the ring is Artinian. One motivation for
this interest is given by a conjecture of Auslander and Reiten
\cite{AR}, which in the case of commutative local rings, can be
stated as follows:

\begin{conjectureAR}Let $(R,\m)$ be a commutative Noetherian local
ring, and $M$ a finitely generated $R$-module. If $\Ext^i_R(M,M\oplus
R) = 0$ for all $i > 0$, then $M$ is free.
\end{conjectureAR}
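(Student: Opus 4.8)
The plan is to separate the hypothesis into its two halves, $\Ext^i_R(M,R)=0$ and $\Ext^i_R(M,M)=0$ for all $i>0$, to use the first to put $M$ into the maximal Cohen-Macaulay world, and then to use the second --- after a descent to an Artinian ring --- to force $\pd_R M$ to be finite; once $\pd_R M<\infty$ and $M$ is maximal Cohen-Macaulay, the Auslander-Buchsbaum equality gives $\pd_R M=0$, i.e.\ $M$ is free. Since both $\Ext$-vanishing and freeness are unchanged by the faithfully flat map $R\to\widehat R$, I would first assume $R$ complete, hence with a canonical module $\omega$. When $R$ is Gorenstein, the condition $\Ext^i_R(M,R)=0$ for $i>0$ is precisely the statement that $M$ is totally reflexive, so $M$ is maximal Cohen-Macaulay by the Auslander-Bridger depth formula; for general Cohen-Macaulay $R$ one runs an analogous argument through the duality $\Hom_R(-,\omega)$, which is presumably the role of the canonical-module results below.

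The next step is to kill the dimension. Since $M$ is maximal Cohen-Macaulay one may choose a maximal $R$-regular sequence $\bd x$ that is also $M$-regular; put $\bar R=R/(\bd x)$ and $\bar M=M/\bd xM$. Tracking the usual long exact sequences one obtains the change-of-rings vanishing $\Ext^i_{\bar R}(\bar M,\bar M\oplus\bar R)=0$ for all $i>0$ together with the equality $\pd_R M=\pd_{\bar R}\bar M$, so it suffices to prove the conjecture for the Artinian ring $\bar R$. Thus everything reduces to: over an Artinian local ring, a module $M$ with $\Ext^i_R(M,R)=0$ and $\Ext^i_R(M,M)=0$ for all $i>0$ is free --- equivalently, has finite projective dimension.

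This last reduction is the crux, and it is where hypotheses on the ring enter. For a Golod ring that is not a hypersurface, the vanishing of $\Ext^i_R(M,M)$ for $i>0$ already forces $M$ to have finite projective or finite injective dimension, by the known ``vanishing implies finiteness'' theorems for Golod rings; combining this with $\Ext^i_R(M,R)=0$ rules out the injective case and yields freeness. For short Gorenstein rings ($\m^3=0$) and, more delicately, for Gorenstein rings with $\m^4=0$, one instead exploits the explicit structure of minimal free resolutions --- rationality of the Poincar\'e series with a controlled denominator, hence eventual linearity or a linear recursion among the Betti numbers --- to transport the vanishing of $\Ext^i_R(M,M)$ down the syzygies of $M$ until a numerical contradiction appears, some $\Tor^R_i(M,k)$ being simultaneously forced to vanish and not to vanish unless $M$ was free. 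A convenient intermediate step, presumably the content of the ``$\m^2M$'' theorem below, is to reduce to the case where $M$ (or $\m M$) is itself annihilated by $\m$, making these resolution computations explicit; and since every ring with $\m^3=0$ is either Golod or short Gorenstein, the two sub-cases together dispose of the $\m^3=0$ case.

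The principal obstacle is precisely this last step in full generality: over an arbitrary Artinian local ring there is no structure theory for minimal free resolutions and no ``vanishing implies finiteness'' phenomenon to invoke, so self-orthogonality cannot be converted into finite projective dimension --- this is, after all, the Auslander-Reiten conjecture itself, and already the Artinian Gorenstein case carries the strength of (a form of) Tachikawa's conjecture. I therefore expect the reduction of the first two paragraphs to be carried out in complete generality, with the theorem then established only for those classes of Cohen-Macaulay rings --- Golod, $\m^3=0$, Gorenstein with $\m^4=0$ --- for which the resolution over the Artinian quotient can be controlled.
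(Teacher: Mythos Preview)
The statement you were given is the Auslander--Reiten \emph{conjecture}; the paper states it as open and does not prove it. What the paper does prove are the special cases $\fm^3=0$ (Theorem~\ref{AR}) and $\fm^2M=0$ over an arbitrary Artinian ring (Theorem~\ref{m^2M}). You recognise this yourself in your final paragraph, so your first two paragraphs should be read as the standard reduction to the Artinian case --- which is exactly what the paper's introduction sketches --- and not as a proof of the conjecture.

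Where your outline does commit to an argument, in the $\fm^3=0$ case, there is a genuine gap. The claim that ``every ring with $\fm^3=0$ is either Golod or short Gorenstein'' is false: there exist commutative local rings with $\fm^3=0$ whose residue field has irrational Poincar\'e series, and such rings are neither Golod (Golod rings have rational Poincar\'e series) nor Gorenstein. So the two sub-cases you propose do not exhaust $\fm^3=0$, and the argument does not close. The paper also does not treat ``Gorenstein with $\fm^4=0$'' at all.

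The paper's route to the $\fm^3=0$ case is quite different and does not invoke any structural dichotomy on $R$ up front. One passes via Matlis duality from $\Ext^i_R(M,M)=0$ to $\Tor^R_i(M,M^\vee)=0$ and works with the invariant $\gamma(L)=\lambda(L)/\nu(L)-1$. A short computation (Remark~\ref{inverse}) gives $\gamma(M^\vee)=\gamma(M)^{-1}$, while the vanishing hypotheses force $\gamma(M)\ge 1$ and $\gamma(M^\vee)\ge 1$ (Lemma~\ref{one}, or Theorem~\ref{3tors} in the $\fm^3=0$ case); hence $\gamma(M)=\gamma(M^\vee)=1$. Feeding this back into the Betti-number relations (Lemma~\ref{sum}, Theorem~\ref{3tors}(4)) forces $e=\nu(\fm)\le 2$. \emph{Only then} does a structure theorem enter: Scheja's result that $e\le 2$ implies $R$ is a complete intersection or Golod, after which Avramov--Buchweitz (for complete intersections) and a short Poincar\'e-series count (for Golod) finish the job. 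The substantive content is the numerical reduction to $e\le 2$, which your proposal skips.
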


This conjecture was intially stated for Artin algebras, and Auslander,
Ding and S\o lberg \cite{ADS} widened the context to algebras over
commutative local rings. A recent result of Huneke and Leuschke
\cite{HL} establishes the conjecture in the case when $R$ is an
excellent Cohen-Macaulay normal domain containing the rational
numbers.

To prove the Auslander-Reiten Conjecture for Cohen-Macaulay rings, it
suffices to consider the case of Artinian rings. Indeed, if $R$ is
assumed to be Cohen-Macaulay, then one can first replace $M$ by a high
syzygy in a minimal free resolution of $M$ (see \cite{AR}) to assume
that $M$ is maximal Cohen-Macaulay. If $x_1,...,x_d$ is a maximal $M$-
and $R$-regular sequence, and $I$ is the ideal generated by it, then
replacing $R$ by $R/I$ and $M$ by $M/IM$, one can assume without loss
of generality that $R$ is Artinian.

In this paper we chiefly concentrate on the commutative Artinian
case. If $\m^2 = 0$, then the first syzygy in a minimal free
resolution of any non-free $R$-module is annihilated by the maximal
ideal, and the Auslander-Reiten conjecture follows trivially.  The
first interesting open case is when $\m^3 = 0$.

Rings in which $\m^3 = 0$ were systematically studied by Lescot
\cite{Le}. In particular, his results give the Poincar\'e series of
finitely generated modules none of whose minimal syzygies split off a
copy of the residue field. Only such modules could provide
counterexamples to the Auslander-Reiten conjecture. For if
$\Ext_R^i(M,M\oplus R) = 0$ for all $i$, and $M$ is not free, it is
clear by shifting degree that no syzygy of $M$ can have the residue
field as a direct summand.

One of our main results proves the Auslander-Reiten conjecture for
rings with $\fm^3=0$. Note that the statement gives an effective bound
on the required number of vanishing Ext modules.

\begin{theoremAR}
Let $(R,\fm)$ be a commutative Artinian local ring with $\fm^3=0$ and $M$
a
finitely generated $R$-module.
\begin{enumerate}[\quad\rm(1)]
\item If $\Ext^i_R(M, M\oplus R)=0$ for four consecutive
values of $i$ with $i\ge 2$, then $M$ is free.
\item If $R$ is Gorenstein and $\Ext^i_R(M,M)=0$ for some $i>0$, then
      $M$ is free.
\end{enumerate}
\end{theoremAR}

The second part extends a result of Hoshino
\cite[?]{Hoshino} on (possibly non-commutative) finite dimensional
self-injective algebras with radical cube zero.

As mentioned above, in the context of the Auslander-Reiten conjecture
one can replace the original module $M$ by an arbitrary minimal syzygy
of $M$, and it will satisfy the same vanishing properties. Now, if
$\m^3 = 0$ and $N$ is a first syzygy of any $R$-module, then $\m^2N =
0$. A closer look at modules annihilated by $\m^2$ shows that the
Auslander-Reiten conjecture holds for any such module over an
arbitrary Artinian local ring. We also give a bound on the required
number of vanishing Exts, in terms of the
minimal number of generators, denoted $\nu(-)$, of certain modules:

\begin{theoremm^2M}
Let $(R,\fm)$ be a commutative Artinian local ring and $M$ a finitely
generated $R$-module
with $\fm^2M=0$.

If  $\Ext^i_R(M, M\oplus R)=0$ for all $i$ with
  $0<i\le \max \lbrace 3, \nu (M), \nu (\fm M)\rbrace $, then $M$ is
  free.
\end{theoremm^2M}

Our results are proved by considering more generally the vanishing of
$\Ext_R^*(M,N)$ where $M$ and $N$ are two finitely generated modules
over an Artinian ring $R$. Since the Matlis dual of such Ext modules
are Tor modules, we often find it more convenient to work with the
vanishing properties of Tor. Our arguments suggest that the vanishing
of $\Tor_i^R(M,N)$ for all positive $i$ places restrictions which
relate the annihilators of $M$, $N$ and $R$. Specifically, we propose
the following:

\begin{conj}
  Let $R$ be a commutative Artinian local ring and let $M$, $N$ be
  nonzero modules with $\fm^2M=\fm^2N=0$. If $\Tor_i^R(M,N)=0$ for all
  $i>0$, then $\fm^3=0$.
\end{conj}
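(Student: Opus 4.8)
The plan is to argue by contradiction: assume $\Tor_i^R(M,N)=0$ for all $i>0$ while $\fm^3\neq 0$, and reach a contradiction.

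First, harmless reductions. Replacing $R$ by $R[x]_{\fm R[x]}$ we may assume $k=R/\fm$ is infinite, and we may assume $R$ is not a field. If one of $M,N$ — say $N$ — were free, then $\fm^2N=0$ would force $\fm^2=0$ and hence $\fm^3=0$; so $M,N$ are nonfree, which over the Artinian ring $R$ gives $\beta_i(M)>0$ and $\beta_i(N)>0$ for all $i\ge 0$. Since $\Tor_i^R(k,N)\cong k^{\beta_i(N)}\neq 0$, a $k$ direct summand of $M$ would give a nonzero summand $\Tor_i^R(k,N)$ of $\Tor_i^R(M,N)$ for every $i$; hence $M$, and by symmetry $N$, has neither a free nor a $k$ direct summand. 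As $\fm^2M=0$ forces $\fm M\subseteq\Soc M$, this says $\Soc M=\fm M\neq 0$ and $\Soc N=\fm N\neq 0$.

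Next I would extract the rigidity this imposes on Betti numbers. Tensoring $0\to\fm M\to M\to M/\fm M\to 0$ — whose outer terms are $k$-vector spaces of dimensions $\dim_k\fm M$ and $\nu(M)$ — with $N$, the long exact $\Tor$-sequence collapses, because $\Tor_{\ge 1}^R(M,N)=0$, to $\Tor_i^R(M/\fm M,N)\cong\Tor_{i-1}^R(\fm M,N)$ for $i\ge 3$, i.e.
$$\nu(M)\,\beta_i(N)=\dim_k(\fm M)\,\beta_{i-1}(N)\qquad(i\ge 3).$$
As the $\beta_j(N)$ are positive integers, $\dim_k(\fm M)/\nu(M)$ must be a positive integer, and $(\beta_i(N))_{i\ge 2}$ is a geometric progression with that ratio; the symmetric computation does the same for $(\beta_i(M))_{i\ge 2}$, with ratio $\dim_k(\fm N)/\nu(N)$. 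The same idea applied to $0\to(\fm^2)^{\nu(M)}\to\Omega_RM\to V\to 0$ (from $\fm^2F_0\subseteq\Omega_RM\subseteq\fm F_0$ with $F_0\cong R^{\nu(M)}$ the start of the minimal resolution and $V$ a $k$-vector space) expresses $\dim_k\Tor_j^R(\fm^2,N)$, $j\ge 1$, as a fixed rational multiple of $\beta_j(N)$.

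The decisive step — and the one I expect to be genuinely hard — is to use $\fm^3\neq 0$ to break this rigidity. One cannot simply pass to $\bar R=R/\fm^3$: over a ring with cube-zero maximal ideal, $\fm^2$-annihilated modules can have identically vanishing $\Tor$ (e.g.\ $R/(x)$ and $R/(y)$ over $k[x,y]/(x^2,y^2)$, which are even nonfree with no $k$-summands), so the argument must use that the vanishing holds over the thicker ring $R$. The handle I would use is the top power of $\fm$: writing $\fm^s=0\neq\fm^{s-1}$, one has $s\ge 4$, the nonzero $k$-vector space $\fm^{s-1}$ lies in $\Soc R$, and $\fm^{s-1}F_i\subseteq\Omega_R^{i+1}M$ for every $i\ge 0$ (since $\Omega_R^iM\subseteq\fm F_{i-1}$), so each syzygy $\Omega_R^jM$ carries $(\fm^{s-1})^{\beta_{j-1}(M)}$ inside its socle — mass that disappears modulo $\fm^3$. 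The plan is then to show that this forced socle growth is incompatible with the rigid geometric shape of $\beta_\bullet(M)$, tested against the nonnegativity of
$$\ell\bigl(\Omega_R^jM\otimes_RN\bigr)=\sum_{i=0}^{j-1}(-1)^{j-1-i}\beta_i(M)\,\ell(N)+(-1)^j\ell(M\otimes_RN)\ \ge\ 0,$$
the socle term being what makes the comparison strict; alternatively, dualizing via $\Tor_i^R(M,N)^{\vee}\cong\Ext_R^i(M,N^{\vee})$ with $N^{\vee}$ again $\fm^2$-annihilated, one could try to feed the resulting $\Ext$-vanishing into the $\fm^2$-annihilator machinery behind Theorem~\ref{m^2M}. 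Making either comparison actually bite — quantifying how precisely $\fm^3\neq 0$ forces the strict inequality — is the crux; everything up to it is routine bookkeeping.
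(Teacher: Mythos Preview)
The statement you are trying to prove is labeled a \emph{Conjecture} in the paper and is not proved there in general; the paper only establishes it under additional hypotheses (standard graded rings in Theorem~\ref{graded}, and the classes listed after Lemma~\ref{Poincare} via the Poincar\'e series constraint). So there is no ``paper's own proof'' to compare against, and your closing admission that ``making either comparison actually bite \dots\ is the crux'' is accurate: that crux is exactly the open part of the conjecture.

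Your preliminary steps are correct and essentially reproduce the paper's Section~\ref{Betti numbers} machinery. The exclusion of free and $k$-summands is Remark~\ref{summands}; the identity $\nu(M)\beta_i(N)=\dim_k(\fm M)\,\beta_{i-1}(N)$ (which already holds for $i\ge 2$, not only $i\ge 3$) is Lemma~\ref{gammas}(3) with $\gamma(M)=\dim_k(\fm M)/\nu(M)$; the integrality of $\gamma(M)$ is Lemma~\ref{one}(2); and the short exact sequence $0\to\fm^2F_0\to\Omega_RM\to V\to 0$ underlies Lemma~\ref{relations}. Pushed one step further, this bookkeeping gives the Poincar\'e series identity of Lemma~\ref{Poincare}, which is the paper's main structural consequence of the hypothesis and is what rules out the many classes of rings listed there.

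Where your outline stalls is precisely where the paper stops too. Your proposed length test $\ell(\Omega_R^jM\otimes_RN)\ge 0$ is already encoded in the Poincar\'e series identity and does not by itself see $\fm^3$; and the socle mass $\fm^{s-1}F_i\subseteq\Omega_R^{i+1}M$ you isolate is real, but you have not shown it is incompatible with the geometric Betti growth --- indeed the paper's proof in the graded case (Theorem~\ref{graded}) does \emph{not} proceed by such a length comparison at all, but instead shows via Lemma~\ref{zero} that $\Tor_i^R(k,\mu_\fm)=0$, invokes Roos's criterion to conclude $R$ is Koszul, and then reads off $\fm^3=0$ from $P_k^R(t)=\Hilb_R(-t)^{-1}$ combined with Lemma~\ref{Poincare}. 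That argument genuinely uses the grading and does not extend to the general local case, which is why the statement remains a conjecture.
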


One can ask a more general question: Let $p$, $q$ be positive
integers and assume that $M$, $N$ are nonzero modules with
$\fm^pM=\fm^qN=0$. If $\Tor_i^R(M,N)=0$ for all $i>0$, does it follow
that $\fm^{p+q-1}=0$?

The hypothesis of Conjecture \ref{conjecture} imposes a strong
condition on the Poincar\'e series of the residue field of $R$,
cf. Lemma \ref{Poincare}. This shows that the conclusion holds for a
large class of rings, including complete intersections of codimension
greater than $2$, Koszul rings, Golod rings, etc. In Theorem
\ref{graded} we prove the conjecture when the ring $R$ is standard
graded.

Another conjecture which has received attention recently is a
conjecture of Tachikawa. A commutative version of this conjecture for
Cohen-Macaulay local rings is the following, cf. Avramov, Buchweitz
and \c Sega \cite{ABS}, and also Hanes and Huneke \cite{HH}:

\begin{conjectureT}
Let $(R,\m)$ be a Cohen-Macaulay local ring.
If $R$ has
a canonical module $\omega$ and $\Ext^i_R(\omega,R) = 0$ for all
$i > 0$, then $R$ is Gorenstein, i.e. $\omega$ is free.
\end{conjectureT}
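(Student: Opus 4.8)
The plan is to deduce this---in the setting of this paper, i.e.\ for Cohen--Macaulay $R$ whose Artinian reduction has $\fm^3=0$---from Theorem~\ref{AR}(1), via the observation that Tachikawa's vanishing hypothesis on $\omega$ is exactly the Auslander--Reiten hypothesis for the module $M=\omega$. First I would reduce to the Artinian case. Since $\omega$ is maximal Cohen--Macaulay, any maximal $R$-regular sequence $\bd x=x_1,\dots,x_d$ in $\fm$ is also $\omega$-regular; then $\bar R:=R/(\bd x)$ is Artinian with canonical module $\bar\omega:=\omega/(\bd x)\omega$, and $R$ is Gorenstein if and only if $\bar R$ is. Feeding $0\to R\xra{x_1}R\to R/x_1R\to 0$ into $\Hom_R(\omega,-)$ and using $\Ext^{>0}_R(\omega,R)=0$ forces $\Ext^{>0}_R(\omega,R/x_1R)=0$; iterating over $x_2,\dots,x_d$, together with the base-change isomorphism $\Ext^i_R(\omega,R/\bd x')\cong\Ext^i_{R/\bd x'}(\omega/\bd x'\omega,\,R/\bd x')$ for an initial segment $\bd x'$ of $\bd x$ (legitimate because each successive $x_j$ is regular both on $R/(x_1,\dots,x_{j-1})$ and on the corresponding quotient of $\omega$, which is again maximal Cohen--Macaulay), this yields $\Ext^i_{\bar R}(\bar\omega,\bar R)=0$ for all $i>0$. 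So I may assume $R=\bar R$ is Artinian with $\fm^3=0$.

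The key remaining point is that over an Artinian local ring the canonical module is the injective hull $E_R(k)$, hence is an injective $R$-module, so $\Ext^i_R(\omega,\omega)=0$ for every $i>0$ with no hypothesis at all. Combining this with the standing hypothesis $\Ext^i_R(\omega,R)=0$ gives $\Ext^i_R(\omega,\omega\oplus R)=0$ for all $i>0$, so Theorem~\ref{AR}(1) applies to $M=\omega$ over the ring $R$ with $\fm^3=0$ and shows $\omega$ is free---that is, $R$ is Gorenstein. Since only four consecutive vanishing Ext's in degrees $\ge 2$ are needed there, and the reduction step above costs at most one degree per element killed, one in fact gets an effective statement: $\Ext^i_R(\omega,R)=0$ for $d+4$ consecutive values $i\ge 2$ already forces $R$ to be Gorenstein.

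The only genuinely hard ingredient is Theorem~\ref{AR}(1) itself, which is proved separately; granting it, the obstacles here are minor---keeping the regular-sequence reduction quantitatively clean (tracking how many consecutive vanishings survive), and the elementary but essential remark that the canonical module of an Artinian ring, being an injective hull, has vanishing higher self-Ext. Conceptually the proof reduces to the slogan: \emph{for rings with $\fm^3=0$, Tachikawa's conjecture is the case $M=\omega$ of the Auslander--Reiten conjecture.}
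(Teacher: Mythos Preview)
The statement is Tachikawa's conjecture, which the paper presents as an \emph{open conjecture} and does not prove in general. You acknowledge this by restricting to Cohen--Macaulay $R$ whose Artinian reduction has $\fm^3=0$, but that is an additional hypothesis not present in the statement, so your proposal does not establish the conjecture as written.

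For that restricted case your argument is correct, and indeed the paper already records your key observation in the introduction: ``This version of the Tachikawa conjecture is subsumed by the Auslander--Reiten conjecture, since the condition $\Ext^i_R(\omega,\omega)=0$ for all $i>0$ is automatic when $R$ is Cohen--Macaulay.'' However, the paper does \emph{not} deduce the $\fm^3=0$ case of Tachikawa from Theorem~\ref{AR}. It instead gives a direct and sharper result, Theorem~\ref{omega}, proved in Section~\ref{Rings with m^3=0} \emph{before} Theorem~\ref{AR} is established. That proof exploits the special structure of $\omega$ via Matlis duality and the explicit length computations of~\ref{computations}, together with Proposition~\ref{3tors-o}, and yields much stronger effective bounds: a single vanishing $\Ext^1_R(\omega,R)=0$, or two vanishings $\Ext^2_R(\omega,R)=\Ext^3_R(\omega,R)=0$, or three consecutive vanishings in degrees $\ge 3$, already force $R$ Gorenstein---whereas your route through Theorem~\ref{AR}(1) requires four consecutive vanishings. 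So your approach is a legitimate shortcut to the qualitative conclusion once Theorem~\ref{AR} is available, but it discards the quantitative content that the paper's direct argument obtains, and it reverses the paper's logical order.
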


This version of the Tachikawa conjecture is subsumed by the
Auslander-Reiten conjecture, since the condition
$\Ext^i_R(\omega,\omega) = 0$ for all $i>0$ is automatic when $R$ is
Cohen-Macaulay.

Assuming that $\fm^3=0$ and $R$ is a finite dimensional algebra over a
field, this conjecture was proved by Asashiba \cite{As} under the weaker
assumption that $\Ext^1_R(\omega,R)=0$.

Recall that $\Ext^i_R(\omega,R)$ is Matlis dual to $\Tor_i^R(\omega,
\omega)$ when $R$ is Artinian. Thus, the theorem below is equivalent
to the one that appears in Section \ref{Rings with m^3=0} under the
same number.

\begin{theoremomega}
  Let $(R, \fm )$ be a commutative Artinian local ring with $\fm ^3=0$.
The
  following statements are equivalent:
\begin{enumerate}[\quad\rm(1)]
\item $R$ is Gorenstein.
\item $\Ext^1_R(\omega, R)=0$.
\item $\Ext^2_R(\omega, R)=\Ext^3_R(\omega, R)=0$.
\item $\Ext^j_R(\omega, R)=\Ext^{j+1}_R(\omega,
      R)=\Ext^{j+2}_R(\omega, R)=0$ for some $j \ge 3$.
\end{enumerate}
\end{theoremomega}

A different proof of the equivalence $(1)\iff (2)$, along the lines of
\cite{AsHo}, is given in \cite{ABS}.

Section \ref{Betti numbers} contains a number of lemmas we will use
throughout the paper, concerning the growth of Betti number of modules
under certain vanishing of Tor conditions.

Section \ref{Rings with m^3=0} contains our work on rings with $\m^3 =
0$. An important technical result is Theorem \ref{3tors}, which gives
detailed information comparing the Betti numbers of two modules with
three consecutive vanishing Tors. In this section we prove Theorem
\ref{omega}.

In Section \ref{Rings with large embedding dimension} we show that if
enough Tors vanish for two modules $M$,$N$ and $\fm^2M=0$, then either
$M$ or $N$ is free. However, for this result we impose strident
conditions on the ring.

Section \ref{The Auslander-Reiten conjecture} contains the proof of
Theorems \ref{AR} and \ref{m^2M}.

In Section \ref{Vanishing of Tor and the Loewy length} we deal with
Conjecture \ref{conjecture}, and give the proof of Theorem
\ref{graded}.

In Section \ref{Rings of type at most 2} we prove Tachikawa's
conjecture for Cohen-Macaulay rings of type two.

\section{Betti numbers}
\label{Betti numbers}

In this paper $(R,\fm,k)$ denotes a commutative Noetherian local ring
with maximal ideal $\fm$ and residue field $k$. We consider finitely
generated $R$-modules $M$, $N$.

The number $\nu(M)$ denotes the minimal number of generators of $M$ and
$\lambda(M)$ denotes the length of $M$.

For every $i\ge 0$ we let $M_i$ denote the $i$-th syzygy of $M$ in a
minimal free resolution \[ \dots\lra R^{b_{i+1}(M)}\xrightarrow{\
\delta_i\ } R^{b_{i}(M)}\lra \dots\xra{\ \delta_0\ } R^{b_0(M)}\lra
M\lra 0 \]

The number $b_i(M)$ is called the $i$-th {\em Betti number} of $M$
over $R$. The {\em Poincar\'e series} of $M$ over $R$ is the formal
power series
\[
P^R_M(t)=\sum_{i=0}^{\infty}b_i(M)t^i
\]

In this section we describe several restrictions on the Betti numbers
of $M$,$N$ that are imposed under the assumption that
$\Tor^R_i(M,N)=0$ for certain values of $i$.

\medskip

\begin{remark}
 \label{summands}
 If $\Tor^R_i(M,N)=0$ for some $i>0$ and $N$ has infinite projective
 dimension, then $k$ is not a direct summand of any of the $R$-modules
 $M_0,\dots, M_{i-1}$.  Indeed, we have
 $\Tor^R_{i-j}(M_j,N)\cong\Tor^R_{i}(M,N)=0$ for all $j<i$. If $k$ is
 a direct summand of $M_j$ for some $j<i$, then $\Tor^R_{i-j}(k,N)=0$,
 contradicting the assumption on $N$.
 \end{remark}

Let $P(t)=a_0+a_1t+\dots+a_it^i+\dots$ be a formal power series. For
each $n\ge 0$ we denote $[P(t)]_{\le n}$ the polynomial
$a_0+a_1t+\dots+a_nt^n$.

The next result is a slightly modified version of a technique in
\cite[1.1]{CM}:
\begin{lemma}
\label{product}
Let $n$ be a positive integer. If $\Tor_i^R(M,N)=0$ for all $i\in
[1,n]$, then
\[
\big [ P^R_{M\otimes_RN}(t)\big]_{\le n}=\big[P^R_M(t)P^R_N(t)\big
]_{\le n}
\]
\end{lemma}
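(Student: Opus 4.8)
The plan is to compare the minimal free resolution of $M\otimes_RN$ with the tensor product of the minimal free resolutions of $M$ and $N$, exploiting the fact that this tensor product is itself a \emph{minimal} complex; this is essentially the technique of \cite[1.1]{CM}.

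First I would fix minimal free resolutions $(F_\bullet,\partial^F)\to M$ and $(G_\bullet,\partial^G)\to N$, so that $\rank F_j=b_j(M)$ and $\rank G_l=b_l(N)$, and form the tensor product complex $C_\bullet=F_\bullet\otimes_RG_\bullet$, with $C_i=\bigoplus_{j+l=i}F_j\otimes_RG_l$ and differential given by the Leibniz rule. Since $F_\bullet$ is a free resolution of $M$, one has $\HH_i(C_\bullet)\cong\Tor_i^R(M,N)$ for all $i$; in particular $\HH_0(C_\bullet)=M\otimes_RN$, while $\HH_i(C_\bullet)=0$ for $1\le i\le n$ by hypothesis. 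Moreover $\rank C_i=\sum_{j=0}^ib_j(M)b_{i-j}(N)$, which is exactly the coefficient of $t^i$ in $P^R_M(t)P^R_N(t)$. Finally, since $\partial^F(F_j)\subseteq\fm F_{j-1}$ and $\partial^G(G_l)\subseteq\fm G_{l-1}$, the Leibniz formula forces $\partial^C(C_i)\subseteq\fm C_{i-1}$, so $C_\bullet$ is minimal.

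Thus the lemma reduces to the following general assertion, which I would prove by induction on $n$: if $C_\bullet$ is a complex of finitely generated free modules in nonnegative degrees with $\partial^C(C_i)\subseteq\fm C_{i-1}$ for all $i$, $\HH_i(C_\bullet)=0$ for $1\le i\le n$, and $L:=\HH_0(C_\bullet)$, then $b_i(L)=\rank C_i$ for $0\le i\le n$. For $n=0$, minimality gives $L\otimes_Rk\cong C_0\otimes_Rk$, since $\partial^C_1$ maps into $\fm C_0$, whence $b_0(L)=\nu(L)=\rank C_0$. For $n\ge1$ the same computation gives $b_0(L)=\rank C_0$, so $C_0\to L$ is a minimal cover and its kernel $L':=\Ima(\partial^C_1)$ is the first syzygy of $L$, with $b_i(L)=b_{i-1}(L')$ for $i\ge1$. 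Because $\HH_1(C_\bullet)=0$, the truncated complex $C'_\bullet=(\cdots\to C_2\to C_1\to0)$, with $C_1$ placed in degree $0$, is minimal with $\HH_0(C'_\bullet)=C_1/\Ima(\partial^C_2)=L'$ and $\HH_i(C'_\bullet)=\HH_{i+1}(C_\bullet)=0$ for $1\le i\le n-1$; by the inductive hypothesis $b_i(L')=\rank C_{i+1}$ for $0\le i\le n-1$, hence $b_i(L)=\rank C_i$ for $1\le i\le n$. Applying this with $L=M\otimes_RN$ gives $b_i(M\otimes_RN)=\sum_{j=0}^ib_j(M)b_{i-j}(N)$ for $0\le i\le n$, which is the claimed equality of truncated power series.

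The argument is largely routine bookkeeping; the one step needing care is the inductive passage, where one must check that the truncation $C'_\bullet$ genuinely computes the first syzygy of $L$. This is where the vanishing $\HH_1(C_\bullet)=0$ is used rather than mere minimality, and it is also the reason the conclusion can only be an equality of the degree-$\le n$ truncations and not of the full Poincaré series.
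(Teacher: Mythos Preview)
Your proof is correct and follows essentially the same approach as the paper: form the tensor product $F\otimes_RG$ of minimal free resolutions, observe that it is a minimal complex whose homology vanishes in degrees $1,\dots,n$, and conclude that its truncation in degrees $\le n$ is the beginning of a minimal free resolution of $M\otimes_RN$. The paper asserts this last step in one line, while you spell it out via an induction on $n$; your careful treatment of the passage from $C_\bullet$ to $C'_\bullet$ (using $\HH_1(C_\bullet)=0$ to identify $C_1/\Ima(\partial^C_2)$ with the first syzygy) is exactly what underlies the paper's assertion.
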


\noindent{\it Proof.} Let $F$, respectively $G$, be a minimal free
resolution of $M$, respectively $N$, over $R$. The hypothesis implies
that $\HH_i(F\otimes_RG)=0$ for all $i\in[1,n]$. Since $F\otimes_RG$
is a minimal complex with $\HH_0(F\otimes_RG)=M\otimes_RN$, we note
that $(F\otimes_RG)_{\le n}$ is the beginning of a minimal free
resolution of $M\otimes_RN$. We have thus:
\begin{xalignat*}{3}
& &   [P^R_{M\otimes_RN}&]_{\le n}=\sum_{i=0}^n
\rank(F\otimes_RG)_it^i=[P_M^R(t)P_N^R(t)]_{\le n}  & & \qed
\end{xalignat*}

\begin{chunk} For each nonzero $R$-module $M$ of finite length we set
\[
\gamma(M)=\frac{\lambda(M)}{\nu(M)}-1
\]

Note that $\gamma(M)$ is also equal to $\displaystyle{\frac{\lambda
    (\m M)}{\nu(M)}}$. It is thus a rational number in the interval
$[0,\lambda(R)-1]$. The extreme values on this interval are attained
as follows: $\gamma(M)=0$ if and only if $\fm M=0$ and $\gamma(M)=\lambda
(R)-1$ if and only if $M$ is free.

We will often use the definition of $\gamma(M)$ in length
computations as follows:
\begin{equation}
\label{length}
\lambda(M)=\nu(M)\big(\gamma(M)+1\big)
\end{equation}
\end{chunk}

\begin{lemma}
\label{gammas}
Let $R$ be an Artinian ring and let $M$, $N$ be finiteley generated
$R$-modules such that $M$ is not zero and $N$ is not free.
\begin{enumerate}[\quad\rm (1)]
\item If $\Tor_i^R(M,N)=0$ for some $i>0$, then
\[
\big(\gamma({M\otimes_RN_i})+1\big)b_i(N)=\big(\gamma(M)-\gamma({M\otimes_RN_{i-1}})\big)b_{i-1}(N)
\]
In particular, there is an inequality $b_{i}(N)\le \gamma(M)
b_{i-1}(N)$.

\item If $\fm ^2M=0$ and $\Tor_i^R(M,N)=0$ for some $i>0$, then
      $\fm(M\otimes_RN_i)=0$ and
\[
b_i(N)=\big( \gamma (N)-\gamma (M\otimes_R N_{i-1})\big) b_{i-1}(N)
\]

\item  If $\fm ^2M=0$ and $\Tor_i^R(M,N)=\Tor_{i-1}^R(M,N)=0$ for some
$i>1$, then
\[
b_i(N)=\gamma (M) b_{i-1}(N)
\]
\end{enumerate}
\end{lemma}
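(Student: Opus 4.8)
\noindent{\it Proof sketch (proposal).}
The plan is to deduce all three parts from a single short exact sequence together with the length identity \eqref{length}. Since $N$ is not free it has infinite projective dimension over the Artinian ring $R$, so every Betti number $b_j(N)$ is positive and every syzygy $N_j$ is nonzero; in particular all the modules $M\otimes_RN_j$ occurring below are nonzero (their minimal number of generators is $\nu(M\otimes_RN_j)=\nu(M)\,b_j(N)$, because $N_j\otimes_Rk\cong k^{b_j(N)}$ forces $M\otimes_RN_j\otimes_Rk\cong(M\otimes_Rk)^{b_j(N)}$), so all the $\gamma$'s make sense.

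For part (1), consider the short exact sequence $0\to N_i\to R^{b_{i-1}(N)}\to N_{i-1}\to 0$ extracted from the minimal free resolution of $N$. Dimension shifting along that resolution gives $\Tor_1^R(M,N_{i-1})\cong\Tor_i^R(M,N)=0$, so tensoring with $M$ keeps the sequence exact on the left:
\[
0\to M\otimes_RN_i\to M^{b_{i-1}(N)}\to M\otimes_RN_{i-1}\to 0 .
\]
Counting lengths yields $\lambda(M\otimes_RN_i)+\lambda(M\otimes_RN_{i-1})=b_{i-1}(N)\lambda(M)$. Now substitute $\lambda(X)=\nu(X)\big(\gamma(X)+1\big)$ for $X=M$, $M\otimes_RN_i$, $M\otimes_RN_{i-1}$, use $\nu(M\otimes_RN_j)=\nu(M)\,b_j(N)$, and cancel the common factor $\nu(M)\ne 0$; this produces the asserted identity. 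The displayed inequality then follows because $\gamma(M\otimes_RN_i)\ge 0$ and $\gamma(M\otimes_RN_{i-1})\ge 0$.

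For part (2) the extra point is that the monomorphism $M\otimes_RN_i\hookrightarrow M^{b_{i-1}(N)}$ above lands inside $\fm\,M^{b_{i-1}(N)}$. Indeed, by minimality of the resolution of $N$ the inclusion $N_i\hookrightarrow R^{b_{i-1}(N)}$ factors as $N_i\hookrightarrow\fm R^{b_{i-1}(N)}\hookrightarrow R^{b_{i-1}(N)}$, and the image of $M\otimes_R\fm R^{b_{i-1}(N)}\to M\otimes_RR^{b_{i-1}(N)}$ is exactly $\fm\,M^{b_{i-1}(N)}$; since $\fm^2M=0$, this submodule is annihilated by $\fm$, hence so is $M\otimes_RN_i$. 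Thus $\gamma(M\otimes_RN_i)=0$, and feeding this into the identity of part (1) gives the stated formula for $b_i(N)$. For part (3) one applies the annihilation statement of part (2) twice: $\Tor_i^R(M,N)=0$ gives $\fm(M\otimes_RN_i)=0$ and $\Tor_{i-1}^R(M,N)=0$ (legitimate as $i-1\ge 1$) gives $\fm(M\otimes_RN_{i-1})=0$, so both $\gamma(M\otimes_RN_i)$ and $\gamma(M\otimes_RN_{i-1})$ vanish and the identity of part (1) collapses to $b_i(N)=\gamma(M)\,b_{i-1}(N)$.

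The only genuinely delicate step I anticipate is the containment $M\otimes_RN_i\subseteq\fm\,M^{b_{i-1}(N)}$ in part (2): one has to remember that $\otimes_RM$ is merely right exact, so the containment must be read off the factorization of $N_i\hookrightarrow R^{b_{i-1}(N)}$ through $\fm R^{b_{i-1}(N)}$ \emph{before} tensoring, rather than from the exact sequence itself. Everything else is bookkeeping with lengths and with the identity \eqref{length}. \qed
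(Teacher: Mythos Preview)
Your proof is correct and follows essentially the same route as the paper's own argument: the same syzygy short exact sequence, the same length count via \eqref{length} together with $\nu(M\otimes_RN_j)=\nu(M)\,b_j(N)$, and the same minimality argument to see that $M\otimes_RN_i$ lands in $\fm M^{b_{i-1}(N)}$ when $\fm^2M=0$. Your write-up is in fact a bit more explicit than the paper's (you spell out the dimension shift $\Tor_1^R(M,N_{i-1})\cong\Tor_i^R(M,N)$ and the factorization through $\fm R^{b_{i-1}(N)}$), but there is no substantive difference in strategy.
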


\begin{proof}
 (1) Consider the short exact sequence
\[
0\lra N_i\lra R^{b_{i-1}(N)}\lra N_{i-1}\lra 0
\]
The hypothesis implies that the sequence remains exact when tensored with
$M$:
\begin{equation}
\label{ses}
0\lra M\otimes_RN_i\lra M\otimes_RR^{b_{i-1}(N)}\lra M\otimes_RN_{i-1}\lra
0
\end{equation}

For any $j$ we use (\ref{length}) to obtain
\begin{gather*}
\lambda(M\otimes_RN_j)=\nu(M\otimes_R
N_j)\big(\gamma(M\otimes_RN_j)+1\big)=\nu(M)b_j(N)\big(\gamma(M\otimes_RN_j)+1\big)\\
\lambda(M\otimes_RR^{b_j(N)})=b_j(N)\lambda(M)=b_j(N)\nu(M)\big(\gamma(M)+1\big)
\end{gather*}

Using these expressions, a length count in (\ref{ses}) leads to the
desired conclusion.

(2) As above, we have a short exact sequence (\ref{ses}). The image of
$N_i$ in $R^{b_{i-1}(N)}$ is contained in $\fm R^{b_{i-1}(N)}$, hence
the image of $M\otimes_RN_i$ in $M\otimes_RR^{b_{i-1}(N)}$ is
contained in $\fm (M\otimes_RR^{b_{i-1}(N)})$, and the latter is
annihilated by $\fm$. We have then $\gamma({M\otimes_RN_i})=0$ and the
relation follows from (1).

(3) By (2) we have $\fm (M\otimes _R N_{i-1})=\fm (M\otimes_R
      N_i)=0$ and therefore $\gamma (M\otimes_R N_{i-1})=\gamma
      (M\otimes_R N_i)=0$.
\end{proof}

\begin{lemma}
\label{one}
Let $R$ be an Artinian ring and let $M$, $N$ be finitely generated
$R$-modules such that $M$ is not zero and $N$ is not free.
\begin{enumerate}[\quad\rm (1)]
\item If $\Tor_i^R(M,N)=0$ for all $i\in [1,\nu(N)]$, then
$\gamma(M)\ge 1$.

\item If $\fm^2M=0$ and $\Tor_i^R(M,N)=0$ for all $i\in [1, \log
   _2 b_1(N)+2]$, then $\gamma(M)$ is an integer.
\end{enumerate}
\end{lemma}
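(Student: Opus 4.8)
The plan is to read off both statements from the Betti-number comparisons recorded in Lemma~\ref{gammas}, together with the standard fact that, $R$ being Artinian, a non-free module has infinite projective dimension; thus $b_i(N)\ge 1$ for every $i\ge 0$. Moreover $N\ne 0$ gives $b_0(N)=\nu(N)\ge 1$, and $M\ne 0$ of finite length makes $\gamma(M)$ a well-defined nonnegative rational number.

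For (1) I would argue by contradiction: assume $\gamma(M)<1$. Since $\Tor_i^R(M,N)=0$ for all $i\in[1,\nu(N)]$, Lemma~\ref{gammas}(1) applies at each such $i$ and gives $b_i(N)\le\gamma(M)\,b_{i-1}(N)$. Because the $b_i(N)$ are positive integers and $\gamma(M)<1$, this forces $b_i(N)\le b_{i-1}(N)-1$ for $i=1,\dots,\nu(N)$, whence $b_{\nu(N)}(N)\le b_0(N)-\nu(N)=\nu(N)-\nu(N)=0$, contradicting $b_{\nu(N)}(N)\ge 1$. Hence $\gamma(M)\ge 1$.

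For (2) set $n=\lfloor\log_2 b_1(N)\rfloor+2\ (\ge 2)$; by hypothesis $\Tor_i^R(M,N)=0$ for all $i\in[1,n]$. Since $\fm^2M=0$, Lemma~\ref{gammas}(3) applies at every $i\in[2,n]$ and yields $b_i(N)=\gamma(M)\,b_{i-1}(N)$; iterating, $b_i(N)=\gamma(M)^{\,i-1}b_1(N)$ for all $i\in[1,n]$. Write $\gamma(M)=p/q$ in lowest terms, $q\ge 1$. For each such $i$ the integer $b_i(N)$ equals $p^{\,i-1}b_1(N)/q^{\,i-1}$, so $q^{\,i-1}\mid p^{\,i-1}b_1(N)$ and hence $q^{\,i-1}\mid b_1(N)$ because $\gcd(p,q)=1$. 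Taking $i=n$ and using $b_1(N)\ge 1$ gives $q^{\,n-1}\le b_1(N)$; but $n-1=\lfloor\log_2 b_1(N)\rfloor+1$ forces $b_1(N)<2^{\,n-1}$, so $q\ge 2$ would yield $q^{\,n-1}\ge 2^{\,n-1}>b_1(N)$, a contradiction. Therefore $q=1$ and $\gamma(M)$ is an integer.

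This is mostly bookkeeping on top of Lemma~\ref{gammas}; the one point calling for genuine thought is in part (2), namely recognizing that the bound $\log_2 b_1(N)+2$ is precisely what makes the inequality $q^{\,n-1}>b_1(N)$ (for $q\ge 2$) clash with the divisibility $q^{\,n-1}\mid b_1(N)$ obtained by iterating Lemma~\ref{gammas}(3). I do not anticipate any obstacle beyond verifying that the stated ranges of vanishing $\Tor$'s are exactly what is needed to invoke the relevant parts of Lemma~\ref{gammas} at each index used, and the elementary observation $b_i(N)\ge 1$.
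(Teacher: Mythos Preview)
Your proof is correct and follows essentially the same approach as the paper: part (1) uses the inequality $b_i(N)\le\gamma(M)b_{i-1}(N)$ from Lemma~\ref{gammas}(1) to force a strictly decreasing sequence of Betti numbers when $\gamma(M)<1$, and part (2) iterates Lemma~\ref{gammas}(3) to obtain $b_i(N)=\gamma(M)^{i-1}b_1(N)$ and then observes that the denominator of $\gamma(M)$ in lowest terms must satisfy $q^{n-1}\mid b_1(N)$, which is incompatible with $q\ge 2$ given the bound on $n$. Your handling of the floor in $n=\lfloor\log_2 b_1(N)\rfloor+2$ is slightly more explicit than the paper's, but the argument is the same.
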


\begin{proof}
  (1) Assume that $\gamma(M)<1$. By Lemma \ref{gammas}(1) we have then
      \[
b_i(N)\le\gamma(M) b_{i-1}(N)<b_{i-1}(N)\quad\text{for all}\quad
i\in[1,\nu(N)]
\]
 Since
  $b_0(N)=\nu(N)$ we conclude that $b_{\nu(N)}(N)=0$, hence $N$ has
  finite projective dimension and it is thus free, contradicting the
  hypothesis.

 (2) Using Lemma \ref{gammas}(3) we have:
\[
b_{i+1}(N)=(\gamma(M))^ib_1(N)\quad\text{ for all}\quad i\in [1,\log _2
b_1(N)+1]
\]

Let $u,v$ be relatively prime positive integers such that
$\gamma(M)=uv^{-1}$. It follows that $v^i$ divides $b_1(N)$ for all
$i\in [1,\log _2 b_1(N)+1]$. If $v\ge 2$, then $b_1(N)\ge
2^i$ for all such $i$, a contradiction.
\end{proof}

Let $e$ denote the minimal number of generators of $\fm$.

\begin{lemma}
\label{relations}
Let $R$ be an Artinian ring, and let $M$, $N$ be non-free finitely
generated $R$-modules. If $\fm^2M=0$ and
$\Tor^R_2(M,N)=\Tor^R_1(M,N)=0$, then the following hold:

\begin{enumerate}[\quad\rm(1)]
\item $b_1(M)=\big(e-\gamma(M)\big)b_0(M)$.
\item $\fm M_1=\fm^2 R^{b_0(M)}$.
\end{enumerate}
\end{lemma}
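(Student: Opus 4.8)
The plan is to establish (1) first and to deduce (2) from it. Write $b=b_0(M)=\nu(M)$ and $F=R^b$, so that there is an exact sequence $0\to M_1\to F\to M\to 0$ with $M_1\subseteq\fm F$ by minimality. Since $\fm^2M=0$, the submodule $\fm^2F$ maps to $0$ in $M$, so $\fm^2F\subseteq M_1\subseteq\fm F$; in particular $\fm M_1\subseteq\fm^2F$ and $\ov{M_1}:=M_1/\fm^2F$ is a $k$-vector space. A length count in $0\to\ov{M_1}\to F/\fm^2F\to M\to 0$, using $\lambda(M)=b(\gamma(M)+1)$ and $\lambda(R/\fm^2)=1+e$, gives $\dim_k\ov{M_1}=b(e-\gamma(M))$. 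The surjection $M_1/\fm M_1\twoheadrightarrow\ov{M_1}$ then shows $b_1(M)=\nu(M_1)\ge b(e-\gamma(M))$, with equality if and only if $\fm M_1=\fm^2F$. Thus (1) implies (2), and it remains to prove $b_1(M)\le b(e-\gamma(M))$.

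The key point is to tensor not with $N$ but with its first syzygy $N_1$, which brings in the second hypothesis through $\Tor^R_1(M,N_1)\cong\Tor^R_2(M,N)=0$. First, tensoring $0\to N_1\to R^{b_0(N)}\to N\to 0$ with $M$ and using $\Tor^R_1(M,N)=0$ realizes $M\otimes_RN_1$ as a submodule of $M^{b_0(N)}$ whose image lies in $\fm M^{b_0(N)}$, which $\fm$ annihilates because $\fm^2M=0$; hence $M\otimes_RN_1$ is a $k$-vector space with $\lambda(M\otimes_RN_1)=\nu(M)\nu(N_1)=b\,b_1(N)$. Next, since $\Tor^R_1(M,N_1)=0$, tensoring $0\to M_1\to F\to M\to 0$ with $N_1$ stays exact, and a length count gives $\lambda(M_1\otimes_RN_1)=b\lambda(N_1)-b\,b_1(N)=\lambda(\fm N_1^{\,b})$. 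As the image of $M_1\otimes_RN_1$ in $N_1^{\,b}$ lies in $\fm N_1^{\,b}$ and has the same length, this injection identifies $M_1\otimes_RN_1\cong\fm N_1^{\,b}$; comparing minimal numbers of generators yields $b_1(M)\,b_1(N)=b\,\nu(\fm N_1)$.

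It remains to identify $\nu(\fm N_1)/b_1(N)$ with $e-\gamma(M)$. For this, tensor the sequence $0\to\ov{M_1}\to F/\fm^2F\to M\to 0$ with $N_1$; again $\Tor^R_1(M,N_1)=0$ keeps it exact, and since $\ov{M_1}$ is a $k$-vector space, $\ov{M_1}\otimes_RN_1=\ov{M_1}\otimes_k(N_1/\fm N_1)$ has length $b(e-\gamma(M))\,b_1(N)$. A length count, together with $\lambda(M\otimes_RN_1)=b\,b_1(N)$ from the previous step, gives $b(e-\gamma(M))b_1(N)=b\,\lambda(N_1/\fm^2N_1)-b\,b_1(N)$, whence $\nu(\fm N_1)=\lambda(N_1/\fm^2N_1)-\nu(N_1)=(e-\gamma(M))\,b_1(N)$. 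Substituting into $b_1(M)b_1(N)=b\,\nu(\fm N_1)$ and cancelling $b_1(N)\ne 0$ (recall $N$ is not free) gives $b_1(M)=b(e-\gamma(M))$, which is (1); then (2) follows from the first paragraph.

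I expect the main difficulty to be finding the right module to tensor with. Tensoring the first two terms of the minimal resolution of $M$ with $N$ itself and counting lengths does not close: one is left with an uncontrolled term $\lambda(\fm^2N)$, reflecting that $M\otimes_RN$ need not be annihilated by $\fm$. The resolution of this is the realization that one should pass to $N_1$, where $\fm^2M=0$ forces $M\otimes_RN_1$ to be $\fm$-torsion, and that both vanishing hypotheses are captured by the single equality $\Tor^R_1(M,N_1)=\Tor^R_2(M,N)=0$; this vanishing must then be used twice, once to identify $M_1\otimes_RN_1$ with $\fm N_1^{\,b}$ and once, via the $R/\fm^2$-presentation of $M$, to compute $\nu(\fm N_1)$.
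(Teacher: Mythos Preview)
Your proof is correct, but it takes a different route from the paper's. Both arguments begin by noting that $M\otimes_RN_1$ is annihilated by $\fm$ (this is the paper's Lemma~\ref{gammas}(2), which you re-derive), and both deduce (2) from (1) by the same length comparison between $\fm M_1$ and $\fm^2R^{b_0(M)}$. The divergence is in proving (1). The paper computes the first Betti number $b_1(M\otimes_RN_1)$ in two ways: once as $e\,b_0(M)\,b_1(N)$ because $M\otimes_RN_1$ is a $k$-vector space, and once as $b_0(M)b_2(N)+b_1(M)b_1(N)$ via the Poincar\'e series identity of Lemma~\ref{product}; it then invokes Lemma~\ref{gammas}(3) for $b_2(N)=\gamma(M)b_1(N)$ and solves for $b_1(M)$. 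You instead identify $M_1\otimes_RN_1$ with $\fm N_1^{\,b}$ by a length argument, obtain $b_1(M)b_1(N)=b\,\nu(\fm N_1)$, and then compute $\nu(\fm N_1)=(e-\gamma(M))b_1(N)$ by tensoring the $R/\fm^2$-presentation $0\to\overline{M_1}\to F/\fm^2F\to M\to 0$ with $N_1$. The paper's argument is shorter because it leverages the machinery already set up in Lemmas~\ref{product} and~\ref{gammas}; yours is more self-contained and yields as a byproduct the explicit isomorphism $M_1\otimes_RN_1\cong\fm N_1^{\,b}$ and the formula $\nu(\fm N_1)=(e-\gamma(M))b_1(N)$, neither of which appears in the paper.
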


\begin{proof}
  (1) Lemma \ref{gammas}(2) shows that the $R$-module $M\otimes_RN_1$
      is a finite direct sum of copies of $k$, hence its first Betti
number
      is $eb_0(M)b_1(N)$. On the other
  hand, since $\Tor_1(M,N_1)=0$, Lemma \ref{product} gives
  $b_1(M\otimes_RN_1)=b_0(M)b_2(N)+b_1(M)b_1(N)$. We also have
  $b_2(N)=\gamma(M) b_1(N)$ by Lemma \ref{gammas}(3), hence
$$eb_0(M)b_1(N)=b_0(M)\gamma(M) b_1(N)+b_1(M)b_1(N)$$
(2) A length count in the short exact sequence
\[
0\lra M_1\lra R^{b_{0}(M)}\lra M\lra 0
\]
using (\ref{length}) gives
\begin{align*}
\lambda(M_1)&=\lambda(R)b_0(M)-\lambda(M)\\
&=\big(1+e+\lambda(\fm^2)\big)b_0(M)-b_0(M)\big(\gamma(M)+1\big)\\
&=\big(e+\lambda(\fm^2)-\gamma(M)\big) b_0(M)
\end{align*}
We next use (1) to obtain
\begin{align*}
\lambda(\fm M_1)&=\lambda(M_1)-\nu(M_1)\\
                &=\lambda(M_1)-b_1(M)\\
&=\big(e+\lambda(\fm^2)-\gamma(M)\big)b_0(M)-\big(e-\gamma(M)\big)b_0(M)\\
                &=\lambda(\fm^2)b_0(M)
\end{align*}

Since $\fm M_1$ is contained in $\fm^2R^{b_0(M)}$ and both modules have
the same length, it follows that they are equal.
\end{proof}

\begin{lemma}
\label{sum}
Let $R$ be an Artinian ring and let $M$, $N$ be non-free finite
$R$-modules with $\fm^2M=\fm^2N=0$.

If $\Tor^R_2(M,N)=\Tor^R_1(M,N)=0$, then
$\gamma(M)+\gamma(N)-\gamma({M\otimes_RN})=e$
 \end{lemma}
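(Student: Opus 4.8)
The plan is to compute $\lambda(M\otimes_R N)$ in two different ways and compare the two expressions. Since $M$ and $N$ are nonzero, the module $M\otimes_R N$ is nonzero with $\nu(M\otimes_R N)=\nu(M)\nu(N)$, so (\ref{length}) gives $\lambda(M\otimes_R N)=\nu(M)\nu(N)\big(\gamma(M\otimes_R N)+1\big)$; matching this against a direct length computation will produce the asserted formula.

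For the direct computation I would first record what is already available about $M$ and $N$ separately. Lemma \ref{relations}(1) gives $b_1(M)=\big(e-\gamma(M)\big)b_0(M)$. And since $\fm^2N=0$, the submodule $\fm N$ is annihilated by $\fm$, hence is a $k$-vector space, and the identity $\gamma(N)=\lambda(\fm N)/\nu(N)$ shows $\fm N\cong k^{\gamma(N)\nu(N)}$; thus there is an exact sequence
\[
0\lra \fm N\lra N\lra k^{\nu(N)}\lra 0 .
\]

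Next I would tensor this sequence with $M$ and pass to the long exact sequence in $\Tor^R_\ast(M,-)$. The crucial point is that the hypothesis $\Tor^R_1(M,N)=0$ forces the connecting homomorphism $\Tor^R_1(M,k^{\nu(N)})\lra M\otimes_R\fm N$ to be injective, so the long exact sequence collapses to the four-term exact sequence
\[
0\lra \Tor^R_1(M,k^{\nu(N)})\lra M\otimes_R\fm N\lra M\otimes_R N\lra M\otimes_R k^{\nu(N)}\lra 0 .
\]
Because $\fm N$ and $k^{\nu(N)}$ are $k$-vector spaces one has $M\otimes_R\fm N\cong k^{\nu(M)\gamma(N)\nu(N)}$, $M\otimes_R k^{\nu(N)}\cong k^{\nu(M)\nu(N)}$, and $\lambda\big(\Tor^R_1(M,k^{\nu(N)})\big)=b_1(M)\nu(N)$, so a length count along the four-term sequence gives
\[
\lambda(M\otimes_R N)=\nu(M)\nu(N)+\nu(M)\gamma(N)\nu(N)-b_1(M)\nu(N) .
\]
Substituting $b_1(M)=\big(e-\gamma(M)\big)\nu(M)$ and equating with $\nu(M)\nu(N)\big(\gamma(M\otimes_R N)+1\big)$ yields $\gamma(M\otimes_R N)+1=\gamma(M)+\gamma(N)+1-e$, which is the claim.

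I do not anticipate a genuine obstacle: every step is either a cited lemma, an elementary observation about modules killed by $\fm$, or additive bookkeeping of lengths. The one spot that might look delicate, namely exactness on the left after tensoring $0\to\fm N\to N\to k^{\nu(N)}\to 0$ with $M$, is handed to us at once by $\Tor^R_1(M,N)=0$; the second hypothesis $\Tor^R_2(M,N)=0$ is used only through Lemma \ref{relations}(1).
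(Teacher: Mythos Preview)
Your argument is correct. The paper's proof is shorter because it simply equates the two existing expressions for $b_1(M)$: the one from Lemma~\ref{relations}(1), namely $b_1(M)=(e-\gamma(M))b_0(M)$, and the one from Lemma~\ref{gammas}(2) (applied with the roles of $M$ and $N$ interchanged), namely $b_1(M)=(\gamma(N)-\gamma(M\otimes_R N))b_0(M)$. Your four-term sequence obtained from $0\to\fm N\to N\to \ov N\to 0$ is just an alternative derivation of this second identity, using the Loewy-layer sequence of $N$ rather than the syzygy sequence used in the proof of Lemma~\ref{gammas}. So the two proofs are the same in spirit; you have merely re-proved a special case of Lemma~\ref{gammas}(2) by hand instead of citing it.
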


\begin{proof}  Compare the relations
$$
b_1(M)=\big(\gamma(N)-\gamma({M\otimes_RN})\big)b_0(M)\quad
\text{and}\quad
b_1(M)=\big(e-\gamma(M)\big)b_0(M)
$$
given by Lemma \ref{gammas}(2), respectively Lemma
\ref{relations}(1).
\end{proof}

\begin{lemma}
\label{Poincare}
Let $R$ be an Artinian ring and let $M$, $N$ be finitely generated
non-free
$R$-modules with $\fm^2M=\fm^2N=0$.

If $\Tor_i^R(M,N)=0$ for all $i>0$, then
\[
P_k^R(t)=\frac{1-\gamma({M\otimes_RN})t}{\big(1-\gamma(M)t\big)\big(1-\gamma(N)t\big)}
\]
\end{lemma}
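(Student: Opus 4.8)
The plan is to derive the Poincaré series of $k$ from the factorization in Lemma~\ref{product} together with the Betti number recursions already established. First I would invoke Lemma~\ref{product}: since $\Tor_i^R(M,N)=0$ for all $i>0$, we get the formal power series identity $P^R_{M\otimes_RN}(t)=P^R_M(t)P^R_N(t)$. So it suffices to compute each of the three Poincaré series $P^R_M(t)$, $P^R_N(t)$, $P^R_{M\otimes_RN}(t)$ individually and check that they combine as claimed.

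Next I would pin down $P^R_M(t)$. Because $\fm^2M=0$ and infinitely many consecutive Tors vanish, Lemma~\ref{gammas}(3) gives $b_{i+1}(M)=\gamma(N)\,b_i(M)$ for all $i\ge 1$ (applying part (3) with the roles of $M$ and $N$ switched, which is legitimate since $\fm^2N=0$ too), and Lemma~\ref{relations}(1) — again with roles switched — gives $b_1(M)=(e-\gamma(N))b_0(M)$. Hmm, wait: to get a clean closed form I actually want the recursion governing $P^R_M$ to involve $\gamma(N)$, and the "initial" relation to tie $b_1(M)$ to $b_0(M)$. Combining $b_0(M)$, $b_1(M)=(e-\gamma(N))b_0(M)$, and $b_{i+1}(M)=\gamma(N)b_i(M)$ for $i\ge 1$, a direct summation yields
\[
P^R_M(t)=b_0(M)\,\frac{1+(e-\gamma(N))t - (e-\gamma(N))t + \gamma(N)t - \cdots}{1-\gamma(N)t},
\]
which after simplification should collapse to $P^R_M(t)=b_0(M)\dfrac{1+(e-1-\gamma(N))t}{1-\gamma(N)t}$; symmetrically $P^R_N(t)=b_0(N)\dfrac{1+(e-1-\gamma(M))t}{1-\gamma(M)t}$. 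For $M\otimes_RN$, Lemma~\ref{gammas}(2) tells us $\fm(M\otimes_RN_i)=0$ for $i\ge 1$, so the tail of the resolution of $M\otimes_RN$ is eventually the resolution of a $k$-vector space; combined with Lemma~\ref{relations}(2) one extracts $P^R_{M\otimes_RN}(t)=b_0(M)b_0(N)\dfrac{1-\gamma(M\otimes_RN)t}{(1-\gamma(M)t)(1-\gamma(N)t)}\cdot(\text{correction factor})$, or more directly one computes it from the two linear recursions on its Betti numbers.

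The cleanest route, and the one I would actually write, is to avoid computing $P^R_{M\otimes_RN}$ from scratch: instead set $P=P^R_k(t)$ and use the standard fact that over an Artinian ring with $\fm^2L=0$ one has $P^R_L(t)=\lambda(L)\cdot(\text{something})$ — no, better: use that $M$, $N$, $M\otimes_RN$ all have all syzygies annihilated by $\fm$ from some point on (for $M\otimes_RN$ from the start of its first syzygy, by Lemma~\ref{gammas}(2)), so each of their Poincaré series equals a polynomial of degree $\le 1$ plus $(\text{constant})\cdot t^2 P^R_k(t)$. Writing $P^R_M(t)=b_0(M)+b_1(M)t+\beta_M t^2P^R_k(t)$ and similarly for $N$ and $M\otimes_RN$, where $\beta_M=\lambda(\fm M_1)/\lambda(\fm^2)=\gamma(M)\cdot(\text{from Lemma~\ref{relations}(1)})$ etc., substituting into $P^R_{M\otimes_RN}=P^R_MP^R_N$ and solving the resulting equation for $P^R_k(t)$ gives the formula. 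The identities $b_1(M)=(e-\gamma(N))b_0(M)$ from Lemma~\ref{relations}(1), $b_1(M)=(\gamma(N)-\gamma(M\otimes_RN))b_0(M)$ from Lemma~\ref{gammas}(2), and the relation $\gamma(M)+\gamma(N)-\gamma(M\otimes_RN)=e$ from Lemma~\ref{sum} are exactly what is needed to make all the spurious terms cancel.

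The main obstacle is bookkeeping: getting the low-degree terms right. The recursions $b_{i+1}=\gamma b_i$ only hold for $i\ge 1$ (Lemma~\ref{gammas}(3) needs $i>1$), so $b_1$ is an exceptional term governed instead by Lemma~\ref{relations}(1); and for $M\otimes_RN$ the first syzygy is already killed by $\fm$, so its resolution synchronizes with that of $k$ one step earlier than for $M$ or $N$. Tracking these off-by-one shifts correctly, and verifying that the numerator really simplifies to $1-\gamma(M\otimes_RN)t$ rather than something with a leftover constant, is where the real care is required; the algebra itself is a routine manipulation of rational functions once Lemma~\ref{sum} is fed in to eliminate $e$.
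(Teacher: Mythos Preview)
Your strategy has the right ingredients but contains a genuine gap at the step where you connect everything to $P^R_k(t)$. You correctly compute $P^R_M(t)$ and $P^R_N(t)$ as explicit rational functions via Lemma~\ref{gammas}; multiplying them gives $P^R_{M\otimes_RN}(t)$ as an explicit rational function. But none of these three series involves $P^R_k(t)$, so the identity $P^R_{M\otimes_RN}=P^R_MP^R_N$ by itself cannot yield $P^R_k(t)$. Your two attempts to bridge this gap both fail. First, the claim that ``the tail of the resolution of $M\otimes_RN$ is eventually the resolution of a $k$-vector space'' conflates $M\otimes_RN_i$ (which is indeed killed by $\fm$, by Lemma~\ref{gammas}(2)) with the syzygy $(M\otimes_RN)_i$; these are different modules, and a short Betti-number check shows $(M\otimes_RN)_1$ is \emph{not} a $k$-vector space in general. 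Second, the ``cleanest route'' assumes the syzygies of $M$ and of $N$ are eventually annihilated by $\fm$; but Lemma~\ref{relations}(2) gives $\fm M_1=\fm^2R^{b_0(M)}$, which is nonzero since $\fm^2\ne 0$, and nothing in the hypotheses forces later syzygies to be $k$-vector spaces either (we do not know $\fm^3=0$).

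The paper's fix is a one-line shift that you nearly stumbled onto: apply Lemma~\ref{product} to the pair $(M,N_1)$ rather than $(M,N)$. Since $\Tor_i^R(M,N_1)\cong\Tor_{i+1}^R(M,N)=0$ for all $i>0$, one gets $P^R_{M\otimes_RN_1}=P^R_M\cdot P^R_{N_1}$; your recursion gives $P^R_{N_1}(t)=b_1(N)/(1-\gamma(M)t)$ and $P^R_M(t)=b_0(M)\,\dfrac{1-\gamma(M\otimes_RN)t}{1-\gamma(N)t}$. Now the crucial point: $\fm(M\otimes_RN_1)=0$ by Lemma~\ref{gammas}(2), so $M\otimes_RN_1\cong k^{b_0(M)b_1(N)}$ and hence $P^R_{M\otimes_RN_1}(t)=b_0(M)b_1(N)\,P^R_k(t)$. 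Dividing through gives the formula. (Incidentally, Lemma~\ref{relations}(1) reads $b_1(M)=(e-\gamma(M))b_0(M)$, not $(e-\gamma(N))b_0(M)$; you do not actually need it here, since Lemma~\ref{gammas}(2) with $i=1$ already supplies $b_1(M)$.)
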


\begin{proof}
By Lemma \ref{gammas} we have
\[
P_{N_1}^R(t)=\frac{b_1(N)}{\big(1-\gamma(M)t\big)}\quad\text{and}\quad
P_M^R(t)=b_0(M)+\frac{b_0(M)\big(\gamma(N)-\gamma({M\otimes_RN})\big)t}{\big(1-\gamma(N)t\big)}
\]
 Lemma \ref{product} then yields
\[
P^R_{M\otimes_RN_1}(t)=P^R_M(t)P^R_{N_1}(t)=b_0(M)b_1(N)\frac{1-\gamma({M\otimes_RN})t}{\big(1-\gamma(M)t\big)\big(1-\gamma(N)t\big)}
\]
The desired conclusion about $P_k^R(t)$ is then obtained using the
fact that $\fm(M\otimes_RN_1)=0$, cf.\ Lemma \ref{gammas}(2).
\end{proof}

\section{Rings with $\fm^3=0$}
\label{Rings with m^3=0}

In this section, unless otherwise stated, we assume that $R$ is an
Artinian local ring with $\fm^3=0$. We set $e=\nu(\m)$ and
$a=\dim_k\Soc(R)$.

When $\fm^2=0$, vanishing of homology is not at all mysterious:

\begin{remark}
\label{m^2}
If $\fm^2 =0$ and $\Tor_i^R(M,N)=0$ for some $i>1$, then $M$ or $N$ is
free.

Indeed, assume that $M$ is not free. The module $M_1$ is contained in
$\fm R^{b_0(M)}$, hence $\fm M_1=0$. It is thus a finite sum of copies
of $k$. Since $\Tor_{i-1}^R(M_1,N)=0$, we conclude  that $N$ has finite
projective dimension, hence it is free.
\end{remark}

The behavior of Betti numbers of finitely generated $R$-modules was
studied by Lescot \cite{Le}. The following results are collected from
the proofs of \cite[3.3]{Le}.

\begin{chunk}
 \label{properties}
Assume $M$ is not free and $\fm^2M=0$. For any $i\ge 0$ the following
hold:
\begin{enumerate}[\quad\rm(1)]
\item There is an inequality $b_{i+1}(M)\ge eb_{i}(M)-\nu(\fm M_i)$.
  Equality holds if and only if $k$ is not a direct summand of $M_{i+1}$.
\item If $i>1$ and $k$ is not a direct summand of $M_i$, then $\nu(\fm
  M_i)=ab_{i-1}(M)$.
\end{enumerate}
\end{chunk}

\begin{remark}
\label{socle}
If $\fm^2M=0$ and $k$ is not a direct summand of $M$, then
$\Soc(M)=\fm M$. (This statement holds for all local rings
$R$, not only for those with $\fm^3=0$.)
\end{remark}

\begin{remark} \label{Soc(R)} If $\Tor^R_i(M, N)=0$ for some $i \ge
 3$ and $M$, $N$ are not free, then $\Soc(R)=\m ^2$.  Indeed, it is
 enough to show that $\Soc(R)\subseteq \m ^2$, the other inclusion
 being obvious. Note that $\Soc(M_{i-1})=\Soc(R^{b_{i-2}(M)})$.  On
 the other hand, Remark \ref{summands} shows that $k$ is not a direct
 summand in $M_{i-1}$, hence $\Soc(M_{i-1})=\m M_{i-1}\subseteq \fm^2
 R^{b_{i-2}(M)}$ by Remark \ref{socle}, and the conclusion follows.
 \end{remark}

\begin{theorem}
\label{3tors}
Let $(R,\fm)$ be a local ring with $\fm^3=0$, and let $M$,$N$ be
non-free $R$-modules satisfying $\fm^2M=\fm^2N=0$.

If there exists an integer $j>0$ such that $$
  \Tor^R_j(M, N)=\Tor^R_{j+1} (M, N)=\Tor^R_{j+2}(M, N)=0
$$ then the following hold:
\begin{enumerate}[\quad\rm(1)]
\item $\gamma(M)$ and $\gamma(N)$ are positive integers.
\item $\displaystyle{\frac{b_{i+1}(M)}{b_i(M)}=\gamma(N)}$ and
  $\displaystyle{\frac{b_{i+1}(N)}{b_i(N)}}=\gamma(M)$ for all $i$ with
$0\le i\le j+1$.
\item $\gamma(M)=\gamma(M_i)$ and $\gamma(N)=\gamma(N_i)$ for all
  $i$ with $0\le i\le j$.

\item $\gamma(M) +\gamma(N)=e$ and $\gamma(M)\gamma(N)=a$.
\end{enumerate}
\end{theorem}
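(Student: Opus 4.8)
The plan is to reduce everything to the case $j=1$ by a syzygy shift, and then to assemble the lemmas of Section \ref{Betti numbers} together with Lescot's growth estimates (\ref{properties}).

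First, the reduction. For $0\le s\le j-1$ the pair $(M_s,N_{j-1-s})$ again satisfies the hypotheses, now with the vanishing in degrees $1,2,3$: indeed $\Tor^R_n(M_s,N_{j-1-s})\cong\Tor^R_{n+j-1}(M,N)$ vanishes for $n=1,2,3$, each of $M_s$, $N_{j-1-s}$ is non-free, and each is killed by $\fm^2$ (a proper syzygy lies in $\fm R^{(\cdot)}$). Granting the theorem for $j=1$ applied to these pairs, part (3) for $(M_s,N_{j-1-s})$ gives $\gamma(M_s)=\gamma(M_{s+1})$; chaining over $s=0,\dots,j-1$ yields $\gamma(M)=\gamma(M_1)=\dots=\gamma(M_j)$, and symmetrically for $N$ — this is (3). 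Part (4) for the pair $(M_{j-1},N)$ reads $\gamma(M_{j-1})+\gamma(N)=e$ and $\gamma(M_{j-1})\gamma(N)=a$, and with (3) this gives (4) for $(M,N)$. From (4), $\gamma(M)$ and $\gamma(N)$ are the two roots of the monic integer polynomial $x^2-ex+a$, hence integers; they are positive because their product $a=\dim_k\Soc R$ is positive; this is (1). Finally, since $k$ is not a direct summand of $M_0,\dots,M_{j+1}$ (Remark \ref{summands}), the equality case of \ref{properties}(1) gives $b_{i+1}(M)=(e-\gamma(M_i))b_i(M)=(e-\gamma(M))b_i(M)=\gamma(N)b_i(M)$ for $0\le i\le j$ (using (3) and (4)), while Lemma \ref{gammas}(3) applied to $(N,M)$ in degree $j+2$ gives $b_{j+2}(M)=\gamma(N)b_{j+1}(M)$; this is (2) for $M$, and symmetrically for $N$.

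Now the case $j=1$: $\Tor^R_1(M,N)=\Tor^R_2(M,N)=\Tor^R_3(M,N)=0$. Since $\Tor^R_3(M,N)=0$, Remark \ref{Soc(R)} gives $\Soc R=\fm^2$, so $a=\lambda(\fm^2)=\nu(\fm^2)$, and Remark \ref{summands} shows $k$ is not a summand of $M_0,M_1,M_2$ nor of $N_0,N_1,N_2$. Apply Lemma \ref{relations} to $(M,N)$ and to $(M_1,N)$ (legitimate, as $\Tor^R_1(M_1,N)\cong\Tor^R_2(M,N)=0$ and $\Tor^R_2(M_1,N)\cong\Tor^R_3(M,N)=0$): $b_1(M)=(e-\gamma(M))b_0(M)$, $\fm M_1=\fm^2R^{b_0(M)}$ and $\fm M_2=\fm^2R^{b_1(M)}$, whence $\nu(\fm M_1)=ab_0(M)$, $\nu(\fm M_2)=ab_1(M)$, and $\gamma(M_1)=ab_0(M)/b_1(M)=a/(e-\gamma(M))$. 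Lemma \ref{gammas}(3) for $(N,M)$ in degrees $2,3$ gives $b_2(M)=\gamma(N)b_1(M)$ and $b_3(M)=\gamma(N)b_2(M)$; comparing $b_2(M)=\gamma(N)b_1(M)$ with the equality case of \ref{properties}(1) at $i=1$ (valid since $k$ is not a summand of $M_2$), namely $b_2(M)=(e-\gamma(M_1))b_1(M)$, gives $\gamma(M_1)=e-\gamma(N)$, hence $a=(e-\gamma(M))(e-\gamma(N))$. Next use \ref{properties}(1) at $i=2$, now only as an inequality since $k$ need not avoid being a summand of $M_3$: $b_3(M)\ge eb_2(M)-\nu(\fm M_2)=eb_2(M)-ab_1(M)$; substituting $b_3(M)=\gamma(N)b_2(M)$, $b_2(M)=\gamma(N)b_1(M)$ and cancelling $b_1(M)$ yields $a\ge\gamma(N)(e-\gamma(N))$, and symmetrically $a\ge\gamma(M)(e-\gamma(M))$. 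Feeding $a=(e-\gamma(M))(e-\gamma(N))$ into $a\ge\gamma(N)(e-\gamma(N))$ and cancelling $e-\gamma(N)>0$ gives $\gamma(M)+\gamma(N)\le e$; but Lemma \ref{sum} gives $\gamma(M)+\gamma(N)=e+\gamma(M\otimes_RN)\ge e$. Hence $\gamma(M)+\gamma(N)=e$, $\gamma(M\otimes_RN)=0$, and $a=(e-\gamma(M))(e-\gamma(N))=\gamma(M)\gamma(N)$ — this is (4). Then (1) follows as in the previous paragraph; (3) is $\gamma(M_1)=e-\gamma(N)=\gamma(M)$ together with its symmetric statement; and (2) follows from $b_1(M)=(e-\gamma(M))b_0(M)=\gamma(N)b_0(M)$, $b_2(M)=(e-\gamma(M_1))b_1(M)=\gamma(N)b_1(M)$, $b_3(M)=\gamma(N)b_2(M)$, and symmetrically for $N$.

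The only delicate point is the one just exploited: since the hypothesis supplies only three consecutive vanishing $\Tor$'s, Lescot's growth formula is available as an \emph{equality} only through the syzygy $M_2$ (resp. $N_2$), so the ``expected'' identity $b_3(M)=eb_2(M)-ab_1(M)$ is a priori merely an inequality. The resolution is that its weak form $a\ge\gamma(N)(e-\gamma(N))$, played against the lower bound $\gamma(M)+\gamma(N)\ge e$ coming from Lemma \ref{sum}, still pins $\gamma(M)+\gamma(N)$ exactly to $e$. I expect this sandwich — not any single long computation — to be the heart of the matter; once it is in place the remaining steps are the routine length- and generator-bookkeeping of Lemmas \ref{product}, \ref{gammas}, \ref{relations}, \ref{sum}, and the reduction to $j=1$ is purely formal.
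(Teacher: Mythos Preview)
Your proof is correct, and the route is genuinely different from the paper's. The paper never reduces to $j=1$: it works directly at degrees $j-1,\dots,j+2$ and pins down $\gamma(M)$ by a sandwich on the single quantity $\gamma(M)^2-e\gamma(M)+a$, getting $\ge 0$ from Lemma~\ref{gammas}(3) against the \emph{inequality} form of \ref{properties} at level $j+1$, and $\le 0$ from the \emph{inequality} form of Lemma~\ref{gammas}(1) (namely $b_j(N)\le\gamma(M)b_{j-1}(N)$) against the \emph{equality} form of \ref{properties} at level $j$; it then proves (2) by a backward induction on $i$ and only invokes Lemma~\ref{sum} at the very end for (4). Your argument instead shifts everything down to $j=1$, uses Lemma~\ref{relations} (which the paper does not touch in this proof) to compute $\gamma(M_1)$ two ways and extract the mixed identity $a=(e-\gamma(M))(e-\gamma(N))$, and then sandwiches $\gamma(M)+\gamma(N)$ between the upper bound coming from Lescot's inequality and the lower bound $e$ coming from Lemma~\ref{sum}. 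Your reduction buys concreteness and avoids the backward induction; the paper's approach is a bit leaner in that parts (1)--(3) use only Lemma~\ref{gammas} and \ref{properties}. One small bonus of your version: by invoking Remark~\ref{Soc(R)} explicitly you make the $j=1$ case self-contained, whereas the paper's appeal to \ref{properties}(2) at level $i=j$ tacitly needs $\Soc R=\fm^2$ when $j=1$.
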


\begin{proof}
(1) We will show that $\gamma(M)$, $\gamma(N)$
satisfy the
  equation $\gamma^2-e\gamma+a=0$. As $\gamma(M)$, $\gamma(N)$ are
  positive rational numbers, this implies that they are integers. The
  statement is symmetric in $M$ and $N$, hence it suffices to prove it
  for $\gamma(M)$.

  By Lemma \ref{gammas} we have:
\begin{equation}
\label{eq1}
b_j(N)\le \gamma(M)b_{j-1}(N)\quad\text{and}\quad
b_{i+1}(N)=\gamma(M)b_i(N)\quad\text{for}\quad i=j,j+1
\end{equation}

The hypothesis and Remark \ref{summands} imply that $k$ is not a
direct summand of $M_i$ for any $i$ with $0\le i\le j+1$ and then
\ref{properties} gives the following relations:
\begin{equation}
\label{eq2}
b_{j+1}(N)=eb_{j}(N)-ab_{j-1}(N)\quad\text{and}\quad b_{j+2}(N)\ge
eb_{j+1}(N)-ab_{j}(N)
\end{equation}

Combining the second relations of (\ref{eq1}) and (\ref{eq2}) we obtain
\[
\gamma(M)^2b_j(N)\ge e\gamma(M)b_j(N)-ab_j(N)
\]
Canceling $b_j(N)$ we get $\gamma(M)^2\ge e\gamma(M)-a$.

On the other hand, using the first relation of (\ref{eq2}) and (\ref{eq1})
we have:
\[
\gamma(M)b_j(N)=eb_j(N)-ab_{j-1}(N)\le eb_j(N)-a\frac{b_j(N)}{\gamma(M)}
\]
Canceling $b_j(N)$ and multiplying both sides by $\gamma(M)$ we
obtain $\gamma(M)^2\le e\gamma(M)-a$.

We conclude:
\begin{equation}
\label{eq3}
\gamma(M)^2-e\gamma(M)+a=0\quad\text{and}\quad\gamma(N)^2-e\gamma(N)+a=0
\end{equation}

(2) We show by induction on $j+1-i$ that $b_{i+1}(N)=\gamma(M)b_i(N)$
for all $i$ with $0\le i\le j+1$.  By (\ref{eq1}), the relation holds
for $i=j,j+1$.  Assuming it holds for $i=l+1$, with $0\le l<j$, we
prove that it holds for $i=l$. By \ref{properties} we have
$b_{l+2}(N)=eb_{l+1}(N)-ab_l(N)$, hence, using the inductive
hypothesis and (\ref{eq3}) we obtain
\[
ab_l(N)=\big(e-\gamma(M)\big)b_{l+1}(N)=a\gamma(M)^{-1}b_{l+1}(N)
\]
and the conclusion follows.

(3) Let $i$ be as in the statement and set $l=j+1-i$. The hypothesis
implies
$\Tor_l^R(M_i,N)=\Tor_{l+1}^R(M_i,N)=0$, hence
$b_{l+1}(N)=\gamma(M_i)b_l(N)$ by Lemma \ref{gammas}(3). By (1), we
also have $b_{l+1}(N)=\gamma(M)b_l(N)$, hence $\gamma(M_i)=\gamma(M)$.

(4) By Lemma \ref{gammas}(2) we have $\fm(M\otimes_R N_j)=0$. The
hypothesis implies $\Tor_1(M,N_j)=\Tor_2(M,N_j)=0$, hence, by Lemma
\ref{sum} we get $\gamma(M)+\gamma(N_j)=e+\gamma(M\otimes_RN_j)=e$.
Using (2) we have then $\gamma(M)+\gamma(N)=e$. Recall from (\ref{eq3})
that $\gamma(M)$ and $\gamma(N)$ are roots for the equation
$\gamma^2-e\gamma+a=0$. We obtain:
\begin{align*}
2\gamma(M)\gamma(N)&=\big(\gamma(M)+\gamma(N)\big)^2-\gamma(M)^2-\gamma(N)^2
\\&=e^2-\big(e\gamma(M)-a\big)-\big(e\gamma(N)-a\big)\\
  &=e^2-e\big(\gamma(M)+\gamma(N)\big)+2a=2a
\end{align*}
and this finishes the proof of the theorem.
\end{proof}

\begin{Remark}
  Let $M,N,j$ be as in the statement of the Theorem. If $l\ge j+3$ and
  $k$ is not a direct summand of $M_i$ for all $i<l$
  (In view of Lemma \ref{summands} this happens, for example, when
$\Tor_l^R(M,N)=0$), then
\[
\frac{b_{i+1}(M)}{b_i(M)}=\gamma(N)\quad\text{and}\quad\frac{b_{i+1}(N)}{b_i(N)}=\gamma(M)\quad\text{for
all $i$
  with}\quad 0\le i\le l-1
\]

Indeed, by \ref{properties} we have $b_{i+1}(N)=eb_{i}(N)-ab_{i-1}(N)$
for all $i\le l-2$.  We proceed by induction on $i$, as in the proof
of Theorem \ref{3tors}. By this theorem, the statement is true for all
$i\le j+1$. Assuming that $i\le l-1$ and
$b_{i}(N)=\gamma(N)b_{i-1}(N)$, we then have:
\[
b_{i+1}(N)=eb_{i}(N)-a\frac{b_{i}(N)}{\gamma(M)}=b_{i}(N)\Big(e-\frac{a}{\gamma(M)}\Big)=b_{i}(N)\gamma(M)
\]
where the last inequality is due to the fact that $\gamma(M)$ is a
solution of the equation $\gamma^2-e\gamma+a=0$.
\end{Remark}

Since $R$ is Artinian, it has a dualizing module $\omega$. In the
remaining part of the section we present results that are obtained
when one of the modules $M$, $N$ is equal to $\omega$. An important
part in our arguments is played by Matlis duality. We recall below
some basic facts.

\begin{chunk}
\label{duality&}
Let $R$ be an Artinian ring (not necessarily with $\fm^3=0$) and let
$\omega$ denote its dualizing module.
Matlis duality then gives: $\nu(\omega)=\dim_k\Soc(R)$,
$\dim_k\Soc(\omega)=1$ and $\lambda(\omega)=\lambda(R)$.

\begin{subchunk}
\label{duality}
For every $R$-module $M$ we set $M^\vee=\Hom_R(M,\omega)$. For any
$R$-modules $M$, $N$ and any $i$ there are isomorphisms:
\[
\Tor^R_i(M,N^\vee)\cong \Ext_R^i(M,N)^\vee
\]
\end{subchunk}
\begin{subchunk}
\label{reflexive}
We also set $M^*=\Hom_R(M,R)$. The ring $R$ is then Gorenstein if and
only if $\omega$ is isomorphic to $\omega^{**}$. Indeed, if $R$ is
Gorenstein, then the relation holds trivially.  Conversely, if
$\omega^{**}\cong\omega$, then
\begin{align*}
\omega^*\otimes_R\omega\cong&\Hom_R\big(\Hom_R(\omega^*\otimes_R\omega,\omega),\omega\big)
\cong\Hom_R\big(\Hom_R\big(\omega^*,
  \Hom_R(\omega,\omega)\big),\omega\big)\\
  &\cong\Hom_R(\omega^{**},\omega)\cong \Hom_R(\omega,\omega)\cong R
\end{align*}
It follows that $\omega$ is cyclic, hence $R$ is Gorenstein.
\end{subchunk}
\end{chunk}

We now return to the case of interest, when $\fm^3=0$.

\begin{chunk}
\label{computations}
Assume that $\fm^2\ne 0$. By the above, we have $\nu(\omega)=a$. Since
$\fm^2\omega$ is not zero and is contained in $\Soc(\omega)$, we also
have $\nu(\fm^2\omega)=1$. Setting $N=\omega_1$ and $r=\nu(\fm^2)$, we
can make then the following computations:
\begin{enumerate}
\item $\lambda(\omega)=\lambda(R)=1+r+e$.
\item $\nu(\fm
\omega)=\lambda(\omega)-\nu(\fm^2\omega)-\nu(\omega)=1+r+e-1-a=e+r-a$.
\item $\lambda(N)=(a-1)(1+r+e)$. This follows from a length count in
      the short exact sequence
\[
0\lra N\lra R^{a}\lra \omega\lra 0
\]
\item If $k$ is not a direct summand of $N$ and $a=r$, then
\ref{properties}(1) and (2) give
\begin{gather*}
\nu(N)=e\nu(\omega)-\nu(\fm\omega)=ea-e=e(a-1)\\
\gamma(N)=\frac{\lambda(N)}{\nu(N)}-1=\frac{(a-1)(1+a+e)}{(a-1)e}-1=\frac{1+a}{e}
\end{gather*}
\end{enumerate}
\end{chunk}

\begin{proposition}
\label{3tors-o}
Let $(R,\fm)$ be an non-Gorenstein Artinian ring with $\fm^3=0$ and $M$ a
non-free finitely generated $R$-module with $\fm^2M=0$.

If there exists an integer $j\ge 2$ such that
\[
\Tor_j^R(M,\omega)=\Tor_{j+1}^R(M,\omega)=\Tor_{j+2}^R(M,\omega)=0
\]
then $e=a+1$, $\gamma({\omega_1})=1$, $\gamma(M)=a$ and
$b_0(M)=b_1(M)=\dots =b_{j+2}(M)$.
\end{proposition}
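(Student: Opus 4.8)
The plan is to replace $\omega$ by its first syzygy $\omega_1$, so that Theorem~\ref{3tors} applies, and then to pin down the invariants by playing the identity for $\gamma(\omega_1)$ recorded in \ref{computations} against the quadratic relation $\gamma^2-e\gamma+a=0$.

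First I would dispose of the degenerate case $\fm^2=0$: then, since $j\ge 2>1$ and neither $M$ nor $\omega$ is free (the latter because $R$ is not Gorenstein), Remark~\ref{m^2} contradicts $\Tor_j^R(M,\omega)=0$. Hence $\fm^2\ne 0$, so \ref{computations} is available; moreover $a=\dim_k\Soc(R)\ge 2$ because $R$ is not Gorenstein. Now set $N=\omega_1$. Then $\fm^2N=0$ since $N\subseteq\fm R^a$, and $N$ is not free, for otherwise $\omega$ would have finite projective dimension, hence be free, hence $R$ Gorenstein. Using $\Tor_i^R(M,\omega_1)\cong\Tor_{i+1}^R(M,\omega)$ for $i\ge 1$, the hypothesis becomes $\Tor_{j-1}^R(M,N)=\Tor_j^R(M,N)=\Tor_{j+1}^R(M,N)=0$, with $j-1>0$. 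Theorem~\ref{3tors}, applied to $M$ and $N$ with the integer $j-1$, then gives that $\gamma(M)$ and $\gamma(N)$ are positive integers with $\gamma(M)+\gamma(N)=e$ and $\gamma(M)\gamma(N)=a$ — so each of them satisfies $\gamma^2-e\gamma+a=0$ — and that $b_{i+1}(M)=\gamma(N)b_i(M)$ for $0\le i\le j$.

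The \emph{crux} is to compute $\gamma(N)$ a second way. Since $\Tor_{j+1}^R(M,\omega)=0$ with $j+1\ge 3$ and $M$, $\omega$ are non-free, Remark~\ref{Soc(R)} gives $\Soc(R)=\fm^2$; as $\fm^3=0$ makes $\fm^2$ a $k$-vector space, this yields $r=\nu(\fm^2)=\dim_k\fm^2=\dim_k\Soc(R)=a$ in the notation of \ref{computations}. Also, from $\Tor_{j+2}^R(\omega,M)=0$ and the fact that $M$ has infinite projective dimension, Remark~\ref{summands} shows $k$ is not a direct summand of $\omega_1=N$. With $r=a$ and $k$ not a summand of $N$, \ref{computations}(4) gives $\gamma(N)=(1+a)/e$. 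Substituting into $\gamma(N)^2-e\gamma(N)+a=0$ and clearing denominators yields $(1+a)^2=e^2$, hence $e=a+1$; then $\gamma(N)=1$ and $\gamma(M)=e-\gamma(N)=a$.

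Finally, to get $b_0(M)=b_1(M)=\dots=b_{j+2}(M)$: from $\Tor_{j+2}^R(M,\omega)=0$ and the infinite projective dimension of $\omega$, Remark~\ref{summands} shows $k$ is not a direct summand of $M_i$ for any $i\le j+1$. The Remark following Theorem~\ref{3tors}, applied to $M$ and $N$ with its integer taken to be $j-1$ and with $l=j+2$, then gives $b_{i+1}(M)=\gamma(N)b_i(M)=b_i(M)$ for all $0\le i\le j+1$, which is the assertion. The main obstacle is the middle paragraph: one has to notice that passing to $\omega_1$ lands inside the hypotheses of Theorem~\ref{3tors}, extract $r=a$ from Remark~\ref{Soc(R)}, and check that $k$ is not a summand of $\omega_1$, so that the dualizing-module identity $\gamma(\omega_1)=(1+a)/e$ of \ref{computations} can be fed into the quadratic $\gamma^2-e\gamma+a=0$; the rest is bookkeeping.
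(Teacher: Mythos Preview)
Your proof is correct and follows essentially the same route as the paper: pass to $N=\omega_1$, apply Theorem~\ref{3tors} with index $j-1$, use Remark~\ref{Soc(R)} and Remark~\ref{summands} to feed $\gamma(\omega_1)=(1+a)/e$ from \ref{computations}(4) into the quadratic $\gamma^2-e\gamma+a=0$, and read off $e=a+1$, $\gamma(\omega_1)=1$, $\gamma(M)=a$. You are in fact slightly more careful than the paper at the end: Theorem~\ref{3tors}(2), applied with index $j-1$, only literally yields $b_0(M)=\dots=b_{j+1}(M)$, and you correctly invoke the Remark following Theorem~\ref{3tors} (with $l=j+2$, using $\Tor_{j+2}^R(M,\omega)=0$ to rule out $k$ as a summand of $M_0,\dots,M_{j+1}$) to reach $b_{j+2}(M)$.
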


\begin{proof}
  Set $N=\omega_1$. By Remark \ref{Soc(R)} we have $\Soc(R)=\fm^2$.
  Also, $k$ is not a direct summand of $N$ by Remark \ref{summands},
  hence \ref{computations}(4) gives $\gamma(N)=(1+a)/e$.

By Theorem \ref{3tors}(4), $\gamma(N)$ is a solution of the equation
$\gamma^2-e\gamma+a=0$. It follows that $(a+1)^2=e^2$, hence $a+1=e$.
In particular, $\gamma(N)=1$, and Theorem \ref{3tors}(4) implies
$\gamma(M)=a$.  The conclusion about the Betti numbers follows from
Theorem \ref{3tors}(2).
\end{proof}

Note that there are examples when the situation in Proposition
\ref{3tors-o} holds, $M$ is not free, and $j$ can be chosen to be
arbitrarily large. Such an example is provided by Avramov, Gasharov
and Peeva \cite[2.2]{AGP}, as described below.

 If $F$ is a complex, then $F^*$ denotes the complex $\Hom_R(F,R)$, with
induced differentials.

\begin{example}
\label{Oana}
Let $l$ be a field, let $X=\{X_1,X_2,X_3,X_4\}$ be a set of
indeterminates over $l$ and set $A=l[X]_{(X)}$. Let $I$ be the ideal
of $A$ generated by elements
\begin{gather*}
X_1^2,X_1X_2-X_3X_4, X_1X_2-X_4^2, X_1X_3-X_2X_4, X_1X_4-X_2^2,\\
X_1X_4-X_2X_3, X_1X_4-X_3^2
\end{gather*}
and set $R=A/I$. The ring $R$ is then local and has $\fm^3=0$.

Let $x_i$ denote the image of $X_i$ in $R$ for $i=1,\dots,4$ and
consider the sequence of homomorphisms of free $R$-modules:
$$F=\qquad \dots \xrightarrow{\psi}
R^2\xrightarrow{\varphi}R^2\xrightarrow{\psi}R^2\xrightarrow{\varphi}\dots$$
where
\begin{align*}
\varphi=\left(\begin{matrix}
x_3& x_1\\
x_4&x_2\end{matrix}\right)
\qquad\psi=\left(\begin{matrix}
x_2& -x_1\\
-x_4&x_3\end{matrix}\right)
\end{align*}

Set $M=\Coker \varphi$. By \cite[(2.2)(i)]{AGP} the complex $F$ is
exact. As noted by Veliche \cite{Ve}, a computation similar to one in
\cite[Section 3]{AGP} shows that the complex $F^*$ is exact. This
yields $\Ext^i_R(M,R)=0$ for all $i>0$, or equivalently, cf.\
\ref{duality}, $\Tor_i^R(M,\omega)=0$ for all $i>0$.
\end{example}

\begin{theorem}
\label{omega}
Let $(R, \fm )$ be an Artinian local ring with $\fm ^3=0$.  The
following statements are equivalent:
\begin{enumerate}[\quad\rm(1)]
\item $R$ is Gorenstein.
\item $\Tor_1^R(\omega, \omega )=0$.
\item $\Tor_2^R(\omega, \omega)=\Tor_3^R(\omega, \omega)=0$.
\item $\Tor_j^R(\omega, \omega)=\Tor_{j+1}^R(\omega,
      \omega)=\Tor_{j+2}^R(\omega, \omega)=0$ for some $j \ge 3$.
\end{enumerate} \end{theorem}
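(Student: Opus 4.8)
The plan is to prove the cyclic chain of implications $(1)\Rightarrow(2)\Rightarrow(3)\Rightarrow(4)$ trivially, and then do the real work in $(4)\Rightarrow(1)$ (which also subsumes $(3)\Rightarrow(1)$ and $(2)\Rightarrow(1)$). For $(1)\Rightarrow(2)$: if $R$ is Gorenstein then $\omega\cong R$ is free, so all positive Tor vanishes. The implications $(2)\Rightarrow(3)\Rightarrow(4)$ are immediate by shifting the index, since vanishing of $\Tor_1^R(\omega,\omega)$ forces $\omega$ free (projective dimension zero once one Tor vanishes against a module of infinite projective dimension — or directly) hence all higher Tors vanish too. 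So the content is entirely in showing $(4)\Rightarrow(1)$.

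For $(4)\Rightarrow(1)$, I would argue by contradiction: assume $R$ is not Gorenstein. If $\fm^2=0$ then Remark \ref{m^2} applied with $i=j\ge 3$ forces $\omega$ (or $\omega$) to be free, i.e. $R$ Gorenstein — contradiction; so we may assume $\fm^2\ne 0$. Now $\omega$ is not free (else $R$ is Gorenstein), so we may apply Proposition \ref{3tors-o} with $M=\omega$ and the given $j\ge 3$. This yields simultaneously $\gamma(\omega)=a$ and $\gamma(\omega_1)=1$, together with $e=a+1$. But Theorem \ref{3tors}(4), applied to the pair $(M,N)=(\omega,\omega)$, gives $\gamma(\omega)+\gamma(\omega)=e$, i.e. $2a=e=a+1$, forcing $a=1$ — which means $R$ is Gorenstein, a contradiction. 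So in fact one does not even need the full strength of \ref{3tors-o}: one only needs that $\gamma(\omega)$ is an integer root of $\gamma^2-e\gamma+a=0$ and that $\gamma(\omega_1)=(1+a)/e$ is the other root, and the two roots sum to $e$; since $\gamma(\omega_1)\gamma(\omega)=a$ and both are positive integers with $\gamma(\omega_1)=(1+a)/e\le 1$, one gets $\gamma(\omega_1)=1$, hence $a=e-1$ and $\gamma(\omega)=a=e-1$; comparing with $\gamma(\omega)\gamma(\omega_1)=a$ gives $\gamma(\omega)=a$, consistent, but then $\gamma(\omega)+\gamma(\omega_1)=a+1=e$ is automatic and gives no contradiction directly — so the clean route really is to invoke Theorem \ref{3tors}(4) with $M=N=\omega$, getting $2\gamma(\omega)=e$ and $\gamma(\omega)^2=a$, whence $e^2=4a$; combined with $e=a+1$ from \ref{3tors-o} this gives $(a+1)^2=4a$, i.e. $(a-1)^2=0$, so $a=1$ and $R$ is Gorenstein.

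The step I expect to be the main obstacle is extracting the Gorenstein conclusion from the numerical identities $e=a+1$, $2\gamma(\omega)=e$, $\gamma(\omega)^2=a$: one must be careful that these are genuinely incompatible with $a\ge 2$, which is exactly the non-Gorenstein hypothesis (recall $a=\dim_k\Soc R=\nu(\omega)$, so $R$ Gorenstein $\iff a=1$). The chain $e^2=4a$ and $e=a+1$ gives $(a-1)^2=0$, so $a=1$; this is the clean contradiction. A secondary point of care is the reduction to $\fm^2\ne0$ and the verification that all hypotheses of Proposition \ref{3tors-o} (namely $R$ non-Gorenstein, $\fm^3=0$, $M=\omega$ non-free with $\fm^2\omega$ possibly nonzero — note $\fm^2\omega\ne0$ here since $\fm^2\ne0$, so strictly $\fm^2 M\ne 0$) are met; in fact \ref{3tors-o} as stated assumes $\fm^2M=0$, so one should instead apply the argument of its proof directly, or apply Theorem \ref{3tors} to the pair $(\omega_1,\omega)$ after noting $\fm^2\omega_1=0$ and that the three consecutive Tors $\Tor^R_{j-1},\Tor^R_j,\Tor^R_{j+1}$ of $(\omega_1,\omega)$ vanish, using $j-1\ge 2$. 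Either way, once the equation $\gamma^2-e\gamma+a=0$ is in hand for both $\gamma(\omega_1)$ and $\gamma(\omega)$ and the value $\gamma(\omega_1)=(1+a)/e$ from \ref{computations}(4) is substituted, the arithmetic closes the proof.
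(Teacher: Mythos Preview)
Your cyclic chain $(1)\Rightarrow(2)\Rightarrow(3)\Rightarrow(4)\Rightarrow(1)$ collapses at the very start: the implications $(2)\Rightarrow(3)$ and $(3)\Rightarrow(4)$ are \emph{not} trivial. Your justification that ``vanishing of $\Tor_1^R(\omega,\omega)$ forces $\omega$ free'' is precisely the statement $(2)\Rightarrow(1)$, which is one of the things to be proved, so the reasoning is circular. There is no general principle by which the vanishing of one $\Tor_i$ (or two consecutive ones) propagates to higher indices. The paper therefore proves $(2)\Rightarrow(1)$, $(3)\Rightarrow(1)$, and $(4)\Rightarrow(1)$ by three genuinely different arguments: for $(2)\Rightarrow(1)$ it uses the Asashiba--Hoshino commutative diagram of \ref{maps} together with careful length bookkeeping to force $a=2=r$, then squeezes $N\otimes_R\omega$ and $\omega^*$ between explicit length bounds to reach a contradiction; for $(3)\Rightarrow(1)$ it combines \ref{computations}, \ref{properties}, and Lemma \ref{gammas} to derive the impossible equation $e^2=6$. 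Your proposal contains neither of these, so as written it establishes at most the equivalence $(1)\Leftrightarrow(4)$.

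Even your $(4)\Rightarrow(1)$ has a gap in execution. Both Proposition \ref{3tors-o} (for $M$) and Theorem \ref{3tors} (for both $M$ and $N$) require annihilation by $\fm^2$; since $\fm^2\ne 0$ and $\omega$ is faithful, $\fm^2\omega\ne 0$, so you cannot take $M=\omega$ in \ref{3tors-o}, nor can you take the pair $(\omega_1,\omega)$ in Theorem \ref{3tors} as you suggest, because the second slot still violates the hypothesis. In particular the relation $2\gamma(\omega)=e$ you want from \ref{3tors}(4) with $M=N=\omega$ is unavailable. The paper avoids this by working with $M=N=\omega_1$ throughout: apply Proposition \ref{3tors-o} with $M=\omega_1$ (using $\Tor_{j-1}^R(\omega_1,\omega)\cong\Tor_j^R(\omega,\omega)$ and $j-1\ge 2$) to get $\gamma(\omega_1)=1$, and then Theorem \ref{3tors}(4) with $M=N=\omega_1$ (via $\Tor_i^R(\omega_1,\omega_1)\cong\Tor_{i+2}^R(\omega,\omega)$) to get $a=\gamma(\omega_1)^2=1$, contradicting $R$ non-Gorenstein. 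Your arithmetic route through $(a-1)^2=0$ is unnecessary once one applies the results to the correct module.
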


We recall a result of Asashiba and Hoshino \cite[2.1]{AsHo}:

\begin{chunk}
\label{maps}
Let $R$ be a local ring. If $M$ is a faithful $R$-module and the sequence
\[
0\lra N\xrightarrow{\ \varphi\ } R^2\xrightarrow {\ \psi\ } M\lra 0
\]
is exact, then there exist homomorphisms $\alpha$ and $\beta$ making
the following diagram commute:
\[
\xymatrixrowsep{1.8pc} \xymatrixcolsep{2.2pc}
\xymatrix{
0\ar@{->}[r]&
N\ar@{->}[r]^{\varphi}\ar@{->}[d]^{\alpha}&R^2\ar@{->}[d]^{\theta}\ar@{->}[r]^{\psi}&M\ar@{->}[d]_{\beta}\ar@{->}[d]\ar@{->}[r]&0\\
0\ar@{->}[r]&
M^*\ar@{->}[r]^{\psi^*}&(R^2)^*\ar@{->}[r]^{\varphi^*}&N^*&{}
}
\]
\end{chunk}

\setcounter{theorem}{10}
\setcounter{equation}{0}

\begin{proof}[Proof of Theorem {\rm\ref{omega}}]
Set $r=\nu(\fm^2)$ and $N=\omega_1$ and $b_i=b_i(\omega)$.

The implications (1) $\Rightarrow $ (2), (1) $\Rightarrow $
(3), (1) $\Rightarrow $ (4) are obvious.

 (2) $\Rightarrow$ (1) Assume $R$ is not Gorenstein.
  By \ref{computations} we have $\lambda(\omega)=1+e+r$ and
$\nu(\fm\omega)=e+r-a$. We obtain then
\[
\gamma(\omega)=\frac{1+e+r}{a}-1=\frac{1+e+r-a}{a}
\]
By \ref{properties}(1) we have $b_1\ge
eb_0-\nu(\fm\omega)=ea-(e+r-a)$ and by Lemma \ref{gammas} we have
$b_1\le \gamma(\omega) b_0$. We conclude:
\[
ea-(e+r-a)\le\frac{1+e+r-a}{a}\cdot a=1+e+r-a
\]
hence $ea-2e+2a-2r\le 1$, or, equivalently, $e(a-2)+2(a-r)\le 1$.
Since $e>1$ and $a\ge r$ we conclude that $a=2=r$ and $\nu(N)=b_1\ge
e$. By \ref{computations} we have thus
\[
\lambda(R)=\lambda(N)=e+3
\]

The hypothesis implies there is a short exact sequence
\[
0\to N\otimes_R\omega\to \omega^2\to \omega\otimes_R\omega\to 0\,.
\]
with $\lambda(\omega^2)=2(e+3)$,
$\lambda(N\otimes_R\omega)=2\nu(N)+\varepsilon$ and
$\lambda(\omega\otimes_R\omega)=4+\eta$, where $\varepsilon$ and
$\eta$ are nonnegative integers. A length count in the short exact
sequence then gives:
\begin{equation}
\label{epsilon}
2e+6=2\nu(N)+\varepsilon+4+\eta\ge 2e+4+\varepsilon+\eta
\end{equation}

In particular, it follows that $\eta\le 2$, hence
$\lambda(\omega\otimes_R\omega)=4+\eta\le 6$. Note that
$\lambda(\omega^*)=\lambda(\omega\otimes_R\omega)$, as we have
$\Hom_R(\omega\otimes_R\omega,\omega)\cong \omega^*$.

For the rest of the proof we will look at the commutative diagram in
\ref{maps}, with $M=\omega$. Note that $\alpha\col N\to \omega^*$ is
injective. Also, the lower sequence in the diagram is right-exact, by
the hypothesis $\Ext^1_R(\omega,R)=0$. In particular,
$\beta\col\omega\to N^*$ is surjective, or equivalently, the dual map
$\beta^\vee\col N\otimes_R\omega\to R$ is injective.

Since $N$ is contained in $\omega^*$, we have $e+3=\lambda(N)\le
\lambda(\omega^*)=4+\eta$ and thus $e\le \eta +1\le 3$.  In particular, we
have
$\eta\in\{1,2\}$.

If $\eta=1$, then $e=2$, hence $\lambda(N)=\lambda(\omega^*)=5$. It
follows that $\alpha$ is an isomorphism. The commutative diagram in
\ref{maps} yields that $\beta$ is an isomorphism, hence $\omega\cong
N^*\cong\omega^{**}$. We apply then \ref{reflexive} to conclude
that $R$ is Gorenstein, a contradiction.

If $\eta=2$, then the inequality (\ref{epsilon}) yields
$\varepsilon=0$ and $\nu(N)=3$, hence $e\le 3$ and
$\lambda(N\otimes_R\omega)=6$.  On the other hand, as noted above,
$N\otimes_R\omega$ is contained in $R$. As $\lambda(R)=e+3\le 6$, it
follows that $N\otimes_R\omega\cong R$, hence $\omega$ is cyclic,
contradicting our assumption that $R$ is not Gorenstein.

 (3) $\Rightarrow$ (1) Assume $R$ is not Gorenstein. By Remark
\ref{Soc(R)}, we have
  $\Soc(R)=\fm^2$ and then \ref{computations} gives
  $\lambda(\omega)=\lambda(R)=1+a+e$ and $\nu(\fm\omega)=e$.

  Since $\Tor_1^R(N,\omega)=\Tor_2^R(N,\omega)=0$, we use Theorem
  \ref{gammas}(3) to obtain:
\[
b_2=\gamma(N) b_1=\frac{\lambda(\fm N)}{\nu(N)}b_1=\frac{\nu(\fm
N)}{b_1}b_1=\nu(\fm N)
\]
On the other hand, \ref{properties}(1) gives $b_2=eb_1-\nu(\fm
N)=eb_1-b_2$, hence we have:
\[
2b_2=eb_1\quad\text{and}\quad \nu(\fm N)=\frac{eb_1}{2}
\]
and we conclude $\gamma(N)=e/2$. By Lemma \ref{gammas}(1) we have then
\[
b_1\le \gamma(N)b_0=\frac{e}{2}a
\]
By \ref{properties}(1) we also have $b_1\ge
eb_0-\nu(\fm\omega)=e(a-1)$. We obtain thus
\[
e(a-1)\le \frac{e}{2}a
\]
and we conclude $a\le 2$. As we assumed $R$ not to be Gorenstein, we
have $a=2$. Recall from \ref{computations} that
$\gamma(N)=(a+1)/e$. Comparing this with the relation
$\gamma(N)=e/2$ obtained above, we obtain $e^2=6$, a
contradiction.

(4) $\Rightarrow $ (1)
  Assume that $R$ is not Gorenstein and set $N=\omega_1$. Applying
  Proposition \ref{3tors-o} with $M=N$ we obtain $\gamma(N)=1$. We
  then use Proposition \ref{3tors}(4) with $M=N$ and we conclude
  $a=\gamma(N)^2=1$, hence $R$ is Gorenstein, a contradiction.
\end{proof}

When $(R,\fm)$ is a local commutative Noetherian ring, the following
question has been considered in the literature and is still open: Does
there exist a number $d$, depending only on $R$, such that whenever
$M$, $N$ are finiteley generated $R$-modules with $\Tor_i^R(M,N)=0$
for all $i\gg 0$ it follows that $\Tor_i^R(M,N)=0$ for all $i>d$?

This problem was considered for Gorenstein rings by Huneke and
Jorgensen \cite{HJ}. A positive answer is known for complete
intersection rings, Gorenstein rings with $\fm^3=0$, and Gorenstein
rings of codimension at most $4$, cf.\ \cite{AB}, \cite{HJ},
respectively \cite{Se}. Our results seem to point towards a positive
answer for all Artinian rings with $\fm^3=0$, but fall short of a
proof. The following remark provides some insight.

\begin{remark} Let $M$, $N$ and $j$ be as in the statement of
Theorem \ref{3tors} and let $i$ be an integer with $0\le i\le j$.  Then
$\Tor_{i+1}(M,
N)=0$ if and only if $\fm(M_i \otimes N)=\fm(M_{i+1}\otimes N)=0$.

Indeed, consider the exact sequence
$$
0\lra \Tor_{i+1}(M, N)\lra M_{i+1}\otimes_R N \lra N^{b_i(M)} \lra M_i
\otimes N \lra 0
$$
Theorem \ref{3tors} shows that
\[
\lambda (N) b_i(M)= \nu
(N)b_{i+1}(M) + \nu(N)b_i(M) = \nu (M_{i+1} \otimes_R N)+\nu (M_i
\otimes _R N)
\]
On the other hand $\fm(M_i \otimes _R N)=0$ and $\fm(M_{i+1}\otimes _R
N)=0$ if and only if $\nu (M_i \otimes _R N)= \lambda (M_i\otimes _R
N)$, and $\nu (M_{i+1} \otimes_R N)=\lambda (M_{i+1} \otimes_R N)$,
respectively. Counting lengths in the above exact sequence gives the
desired conclusion.
\end{remark}

\section{Rings of large embedding dimension} \label{Rings with large
embedding dimension}

For any finitely generated module $N$ we set
\[
c(N)=\max\{4, \log_2(b_1(N))+2\}
\]
(where $\log _20 =-\infty$).

The {\it Loewy length} of the ring $R$, denoted $\ell\ell(R)$, is the
largest
integer $h$ for which $\fm^h\ne 0$.

In this section we prove the following:

\begin{theorem}
\label{adela}
Let $(R,\fm)$ be an Artinian local ring satisfying $\nu(\fm)\ge
\lambda(\fm^2)-\ell\ell(R)+4$.
\begin{enumerate}[\quad\rm(1)]
\item
If $\fm^2M=0$ and $\Tor_i^R(M,N)=0$ for all $i\in [1, c(N)]$, then
either $M$ or $N$ is free.\item If $\m^3=0$ and $\Tor_i^R(M,N)=0$ for
three consecutive values of $i\ge 2$, then either $M$ or $N$ is free.
\end{enumerate}
\end{theorem}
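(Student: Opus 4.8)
The plan is to assume both $M$ and $N$ are non-free and deduce a contradiction. Since the case $\fm^2=0$ is settled at once by Remark~\ref{m^2} (some $\Tor_i$ with $i>1$ vanishes), we may assume $\fm^2\ne 0$ throughout; then a nonzero module annihilated by $\fm^2$ is never free, so in part~(1) the real goal is to prove that $N$ is free. In both parts the engine is the same: use the vanishing Tor's to pin down the invariant $\gamma(M)$ — and, when available, $\gamma(N)$ — as a small positive integer tied to $e=\nu(\fm)$ and $\lambda(\fm^2)$ by a product/sum relation, and then observe that the embedding-dimension hypothesis forbids this.

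For part~(2): since $\fm^3=0$, every first syzygy is killed by $\fm^2$, so replacing $M$ by $M_1$ and/or $N$ by $N_1$ and shifting the three vanishing degrees down by at most two reduces to the case $\fm^2M=\fm^2N=0$ with three consecutive vanishings in degrees $j,j+1,j+2$; the shifted degrees remain $\ge 1$ except in the single configuration $j=2$ with both $M,N$ of Loewy length $3$, which is handled by a direct variant of the argument that follows. Now Theorem~\ref{3tors} applies: $\gamma(M),\gamma(N)$ are positive integers with $\gamma(M)+\gamma(N)=e$ and $\gamma(M)\gamma(N)=a$, where $a=\dim_k\Soc(R)$. One of the original vanishing Tor's has index $\ge 3$, so Remark~\ref{Soc(R)} gives $\Soc(R)=\fm^2$, whence $a=\lambda(\fm^2)$; and $\fm^2\ne 0=\fm^3$ forces $\ell\ell(R)=2$, so the hypothesis reads $e\ge\lambda(\fm^2)+2=a+2$. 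Then $(\gamma(M)-1)(\gamma(N)-1)=\gamma(M)\gamma(N)-\bigl(\gamma(M)+\gamma(N)\bigr)+1=a-e+1\le -1$, impossible since both factors are nonnegative integers. This proves~(2).

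For part~(1): Lemma~\ref{one}(2) (applicable because $c(N)\ge\log_2 b_1(N)+2$) shows $\gamma(M)$ is an integer, and it is positive, since $\fm M=0$ would force $\Tor_1^R(M,N)\ne 0$. From Lemma~\ref{relations}(1), $b_1(M)=\bigl(e-\gamma(M)\bigr)b_0(M)>0$, so $1\le\gamma(M)\le e-1$; from Lemma~\ref{relations}(2), $\fm M_1=\fm^2R^{b_0(M)}$, and a length count in $0\to M_1\to R^{b_0(M)}\to M\to 0$ using $\lambda(R)=1+e+\lambda(\fm^2)$ gives $\gamma(M_1)=\lambda(\fm^2)/\bigl(e-\gamma(M)\bigr)$. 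One then descends: $(M_1,N)$ retains $c(N)-1\ge 3$ consecutive vanishings, $M_1$ has Loewy length one smaller than $R$, and Lemma~\ref{gammas} forces the common growth rate $b_i(N)=\gamma(M)^{\,i-1}b_1(N)$ of the Betti numbers of $N$ through successive syzygies; iterating the length counts one is driven either to a ring of Loewy length $\le 2$ in a situation already excluded, or directly to a numerical inequality of the shape $e\le\lambda(\fm^2)-\ell\ell(R)+3$, contradicting the hypothesis.

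The main obstacle is precisely this descent in part~(1). Once $\fm^3\ne 0$ the syzygies of $M$ are no longer annihilated by $\fm^2$, so the Betti-number recursions of \ref{properties} (valid only for $\fm^3=0$) are unavailable and one must argue with the general identities of Section~\ref{Betti numbers} alone; propagating, through roughly $\log_2 b_1(N)+2$ syzygies, both the integrality of the relevant $\gamma$'s and the forced growth of the Betti numbers of $N$ is the delicate point, and it is exactly what dictates the form $c(N)=\max\{4,\log_2 b_1(N)+2\}$ and forces the ring-side bound to be calibrated with $\ell\ell(R)$. In part~(2) the analogous bookkeeping is easy because \ref{properties} is in force, which is why three consecutive vanishings already suffice there.
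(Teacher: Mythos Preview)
Your treatment of part~(2) is correct and is essentially the paper's argument in cleaner form: once Theorem~\ref{3tors} yields positive integers $\gamma(M),\gamma(N)$ with $\gamma(M)+\gamma(N)=e$ and $\gamma(M)\gamma(N)=a=\lambda(\fm^2)$, your identity $(\gamma(M)-1)(\gamma(N)-1)=a-e+1\le-1$ finishes at once, whereas the paper reaches the same contradiction by locating $\gamma(M)$ among the roots of $\gamma^2-e\gamma+a$.

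Part~(1), however, has a real gap, and you have correctly located it without resolving it. The descent through syzygies of $M$ cannot proceed past the first step. Lemma~\ref{relations}(2) gives $\fm M_1=\fm^2R^{b_0(M)}$, whence $\fm^2M_1=\fm^3R^{b_0(M)}\ne 0$ as soon as $\fm^3\ne 0$. Thus $M_1$ fails the hypothesis $\fm^2(-)=0$ shared by Lemmas~\ref{gammas}(2)(3), \ref{one}, \ref{relations} and \ref{sum}: there is no way to conclude that $\gamma(M_1)$ is an integer, no analogue of $b_1(M_1)=(e-\gamma(M_1))b_0(M_1)$, and hence no second step. Your formula $\gamma(M_1)=\lambda(\fm^2)/(e-\gamma(M))$ is correct but is the end of the road, not the beginning of an iteration; no parameter is actually decreasing.

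The paper's proof of~(1) is entirely different and never looks at a syzygy of $M$. The missing ingredient is an inequality of Gasharov and Peeva \cite[2.2]{GP}, valid for \emph{every} finitely generated $N$ over \emph{every} Artinian local ring:
\[
b_{i+1}(N)\ \ge\ e\,b_i(N)-\bigl(\lambda(\fm^2)+2-\ell\ell(R)\bigr)b_{i-1}(N)\,.
\]
Setting $a=\lambda(\fm^2)+3-\ell\ell(R)$ makes this strict. Feeding in the exact growth $b_i(N)=\gamma(M)b_{i-1}(N)$ from Lemma~\ref{gammas}(3) yields $\gamma(M)^2-e\gamma(M)+a>0$, so $\gamma(M)$ lies strictly outside the interval between the two roots of $\gamma^2-e\gamma+a$. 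The ring hypothesis is exactly $a\le e-1$, which forces those roots to satisfy $\gamma_1\le 1$ and $\gamma_2\ge e-1$; since $\gamma(M)$ is a positive integer at most $e-1$ by Lemma~\ref{one}(2) and Lemma~\ref{relations}(1), this is a contradiction. The bound $c(N)=\max\{4,\log_2 b_1(N)+2\}$ is calibrated to make Lemma~\ref{one}(2) apply, not to count syzygy steps.
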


\begin{proof}[Proof of Theorem {\rm\ref{adela}}]
 Set $e=\nu(\fm)$ and $h=\ell\ell(R)$. Assume that $M$ and $N$ are not
free.

(1) Let $i$ be a positive integer. In the proof \cite[2.2]{GP}
    Gasharov and
Peeva show that the following inequality holds for a finitely generated
module
$N$ and any local Artinian ring $R$:
\[
b_{i+1}(N)\ge eb_i(N)-\big(\lambda(\fm^2)+2-h\big)b_{i-1}(N)
\]
Setting $a=\lambda(\fm^2)+3-h$, we conclude that for all positive
integers $i$ there is a strict inequality
\[
b_{i+1}(N)>eb_i(N)-ab_{i-1}(N)
\]

We let then $i$ be any integer such that $2\le i\le c(N)-2$. Lemma
\ref{gammas}(3) gives that $b_{j}(N)=\gamma(M)^{j-1}b_{1}(N)$, for
$j=i,i+1$ and we conclude
\begin{equation}
\label{strict}
\gamma(M)^2-e\gamma(M)+a>0
\end{equation}

The roots of the equation $\gamma ^2-e\gamma +a=0$ are
\[
\gamma_{1,2}=\frac{e \pm\sqrt{e^2-4a}}{2}
\]
The hypothesis gives $a\le e-1$, hence $e^2-4a\ge (e-2)^2$. Both
$\gamma_1$ and $\gamma_2$ are then real. Assume $\gamma_1\le \gamma_2$. We
obtain then $\gamma_1\le 1$ and $\gamma_2\ge e-1$.

The strict inequality in (\ref{strict}) shows that $\gamma(M)$ is
outside the interval $[\gamma_1,\gamma_2]$.  If $\gamma(M)<\gamma_1$,
then $\gamma(M)<1$ and this contradicts Proposition \ref{one}. We
conclude that $\gamma(M)>\gamma_2$, hence $\gamma(M)>e-1$. We recall
that $\gamma(M)$ is an integer, cf. Proposition \ref{one}, and we
conclude $\gamma (M)\ge e$.  On the other hand, Lemma
\ref{relations}(1) implies $e>\gamma(M)$, a contradiction.

(2) We replace $M$ with $M_1$, if necessary, so that we may assume
    $\fm^2M=0$. Proceed then as in (1), using Theorem \ref{3tors}.
\end{proof}

\section{The Auslander-Reiten Conjecture}
\label{The Auslander-Reiten conjecture}

In this section we prove the conjecture of Auslander and Reiten
(stated in the introduction) when the module is annihilated by
$\fm^2$. More precise statements are obtained when $\fm^3=0$. We state
our main results below. The proofs will follow later in the section.

\begin{theorem}
\label{AR} Let $(R,\fm)$ be an Artinian local ring with $\fm^3=0$ and
$M$ a finitely generated $R$-module.
\begin{enumerate}[\quad\rm(1)]
\item If $\Ext^i_R(M, M\oplus R)=0$ for four consecutive
values of $i$ with $i\ge 2$, then $M$ is free.
\item If $R$ is Gorenstein and $\Ext^i_R(M,M)=0$ for some $i>0$, then
      $M$ is free
\end{enumerate}
\end{theorem}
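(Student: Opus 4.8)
The plan is to reduce both statements to the Tor-vanishing results of Section \ref{Rings with m^3=0}, chiefly Theorem \ref{3tors} and Theorem \ref{omega}, via Matlis duality and a syzygy reduction. For part (1), suppose $M$ is not free. As noted in the introduction, the hypothesis $\Ext^i_R(M,M\oplus R)=0$ for four consecutive $i\ge 2$ is inherited by syzygies, so after replacing $M$ by $M_2$ we may assume $\fm^2M=0$ (every second syzygy over a ring with $\fm^3=0$ is annihilated by $\fm^2$) while retaining four consecutive vanishing $\Ext$'s starting at some index $\ge 2$ — in fact, by further shifting, I would arrange that $\Ext^i_R(M,M)=0$ and $\Ext^i_R(M,R)=0$ both hold for three consecutive values $i=j,j+1,j+2$ with $j\ge 2$. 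Using \ref{duality}, the vanishing $\Ext^i_R(M,R)=0$ translates to $\Tor_i^R(M,\omega)=0$ for $i=j,j+1,j+2$; if $R$ is not Gorenstein this contradicts nothing by itself, but Proposition \ref{3tors-o} then forces $e=a+1$, $\gamma(M)=a$, and $b_0(M)=\dots=b_{j+2}(M)$. On the other hand $\Ext^i_R(M,M)=0$ for three consecutive $i\ge 2$ gives $\Tor_i^R(M,M^\vee)=0$ for three consecutive values; since $\fm^2M=0$ we also get $\fm^2M^\vee=0$ (Matlis dual of a module killed by $\fm^2$ is killed by $\fm^2$), so Theorem \ref{3tors} applies with $N=M^\vee$, yielding $\gamma(M)+\gamma(M^\vee)=e$ and $\gamma(M)\gamma(M^\vee)=a$, i.e.\ $\gamma(M)$ and $\gamma(M^\vee)$ are the two roots of $\gamma^2-e\gamma+a=0$. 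But $\lambda(M)=\lambda(M^\vee)$ and $\nu(M)=\nu(M^\vee)$ are not generally equal, so I cannot immediately conclude $\gamma(M)=\gamma(M^\vee)$; instead I would combine $\gamma(M)=a$ from Proposition \ref{3tors-o} with $\gamma(M)\gamma(M^\vee)=a$ to get $\gamma(M^\vee)=1$, and with $\gamma(M)+\gamma(M^\vee)=e=a+1$ this is consistent — so the contradiction must come from a finer count. The clean route is: if $R$ is Gorenstein then $\omega\cong R$ and part (1) follows from part (2) applied with $\Ext^i_R(M,M)=0$; if $R$ is not Gorenstein, apply Theorem \ref{omega}, whose condition (4) (three consecutive vanishing $\Tor_i^R(\omega,\omega)$ for $i\ge 3$) must fail — so I would instead show that the module $N=\omega_1$ or $M$ itself, having the required $\Tor$-vanishing, forces $R$ to be Gorenstein through the numerical identities $e=a+1$, $\gamma=1$, $a=\gamma^2=1$ exactly as in the proof of (4)$\Rightarrow$(1) in Theorem \ref{omega}.

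For part (2), assume $R$ is Gorenstein with $\fm^3=0$ and $\Ext^i_R(M,M)=0$ for a single $i>0$; suppose $M$ is not free. Since $R$ is Gorenstein Artinian, $\omega\cong R$, so $M^\vee=\Hom_R(M,R)=M^*$ and \ref{duality} gives $\Tor_i^R(M,M^*)\cong\Ext^i_R(M,M)^\vee=0$. Replacing $M$ by a high syzygy, which preserves the vanishing (shifting the single index upward), I may assume $\fm^2M=0$; then $\fm^2M^*=0$ as well. Now I have a single vanishing $\Tor_i^R(M,M^*)=0$. Here I would invoke Lemma \ref{gammas}(1) applied with the pair $(M^*,M)$ and with $(M,M^*)$ to extract the inequalities $b_i(M)\le\gamma(M^*)b_{i-1}(M)$ and the symmetric one, then iterate. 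The key extra input for a Gorenstein ring is that $\nu(\fm)=e$ and $\dim_k\Soc(R)=1$, so $a=1$; and since $M$ is self-dual-like ($M^{**}\cong M$ because $R$ is Gorenstein and $M$ is maximal Cohen-Macaulay over the Artinian ring, i.e.\ any module), we get $\gamma(M)=\gamma(M^*)$. Then Lemma \ref{gammas}(1) gives $b_i(M)\le\gamma(M)b_{i-1}(M)$ while a Lescot-type lower bound (\ref{properties}(1)) gives $b_{i}(M)\ge eb_{i-1}(M)-\nu(\fm M_{i-1})$ with $\nu(\fm M_{i-1})\le \lambda(\fm^2)b_{i-2}(M)/\dots$; combined with $a=1$ I would derive $\gamma(M)^2-e\gamma(M)+1\le 0$ and $\ge 0$ simultaneously only if $\gamma(M)=(e\pm\sqrt{e^2-4})/2$, forcing $e=2$, and then a direct analysis of the two-generated case (as in Example \ref{Oana} but now with $\Ext^i_R(M,M)=0$ rather than $\Ext^i_R(M,R)=0$) yields the contradiction.

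The main obstacle I anticipate is part (2) with only a \emph{single} vanishing $\Ext$ (not three consecutive), since Theorem \ref{3tors} and Proposition \ref{3tors-o} both need three consecutive vanishing Tors. To get mileage from one vanishing I would need to exploit self-duality heavily: $M^{**}\cong M$ over a Gorenstein Artinian ring, so $\Tor_i^R(M,M^*)=0$ gives, by the symmetry of Tor and another application of \ref{duality}, also $\Ext^i_R(M^*,M^*)=0$ and hence $\Tor_i^R(M^*,M)=0$ — the same statement, but the hope is that combining $b_i(M)\le\gamma(M)b_{i-1}(M)$ with the self-dual version of Lemma \ref{relations}(1), namely $b_1(M)=(e-\gamma(M))b_0(M)$ (which needs two consecutive vanishing Tors, so I may first need to raise to a syzygy where $\Tor_i$ and $\Tor_{i-1}$ both vanish — but I only have one index!). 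This is the crux: I expect the actual proof uses Hoshino-style arguments specific to radical-cube-zero self-injective algebras, lifted to the commutative setting, rather than the Betti-number machinery alone; concretely, I would use the commutative diagram in \ref{maps} with $M$ faithful (true since $M$ is not free and $R$ is Gorenstein, as then $\Soc(R)\subseteq\operatorname{ann}(M)$ would force a free summand) to relate $M$, $M^*$, and the syzygy $\Omega M$, and read off that the single vanishing $\Ext^i_R(M,M)=0$ propagates to give $M\cong M^{**}$ compatibly, forcing $\nu(M)=\nu(M^*)$ and a length identity that collapses to $M$ free.
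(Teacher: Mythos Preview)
Your sketch for part (1) is in the right neighborhood but stalls exactly where you say it does, and the missing ingredient is concrete: Remark \ref{inverse} gives $\gamma(M^\vee)=\gamma(M)^{-1}$ whenever $\fm^2M=0$ and a single $\Ext^i_R(M,M)$ vanishes. The paper replaces $M$ by $M_1$ (so $\fm^2M=0$), gets three consecutive vanishing $\Ext^i_R(M,M)$, dualizes to $\Tor^R_i(M,M^\vee)=0$, and then applies Theorem \ref{3tors} to the pair $(M,M^\vee)$: part (1) of that theorem makes $\gamma(M)$ and $\gamma(M^\vee)$ positive integers, and together with $\gamma(M^\vee)=\gamma(M)^{-1}$ this forces $\gamma(M)=\gamma(M^\vee)=1$; part (4) then gives $a=1$ and $e=2$, so $R$ is a complete intersection and Avramov--Buchweitz \cite{AB} finishes. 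Your detour through Proposition \ref{3tors-o} and the case split on $R$ Gorenstein is not needed and, as you noticed, does not by itself produce a contradiction.

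For part (2) there is a genuine gap. First, your claim $\gamma(M)=\gamma(M^*)$ is false: over a Gorenstein ring $M^*=M^\vee$, and Remark \ref{inverse} gives $\gamma(M^*)=\gamma(M)^{-1}$ (the paper records this explicitly as $\gamma(M_j)=b_{j-1}/b_j$ and $\gamma(M_j^*)=b_j/b_{j-1}$). Second, with only one vanishing index none of Lemma \ref{relations}, Lemma \ref{sum}, or Theorem \ref{3tors} applies, and the diagram of \ref{maps} does not supply what you need either. The paper's actual argument is quite different from anything you outlined: it passes to a \emph{complete} resolution (\ref{negative}), so that syzygies $M_j$ and Betti numbers $b_j$ are defined for all $j\in\mathbb Z$, and observes that $\Ext^i_R(M_j,M_j)=0$ for \emph{every} $j$. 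Lemma \ref{gammas} then yields $b_{j+i}/b_{j+i-1}\le b_j/b_{j-1}$ for all $j$, so each of the $i$ subsequences of ratios is nonincreasing; since the $b_n$ are positive in both directions, these ratios must all equal $1$. The recurrence from \ref{properties} then forces $e=2$, Eisenbud's periodicity theorem gives $M_j\cong M_{j+2}$, and a short parity argument reduces to $i=1$, where one computes $\fm(M\otimes_R M^*)=0$ and reaches a contradiction via $\Hom_R(M,M)$. The two-sided infinite resolution is the idea you are missing; without it a single vanishing $\Ext$ gives you essentially one inequality, not enough to pin down the Betti sequence.
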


The second part was inspired by the following statement of Hoshino
\cite{Hoshino}: If $R$ is a finite dimensional local algebra (possibly
non-commutative) over a field and the cube of its radical is zero,
then any finitely generated $R$-module $M$ with $\Ext^1_R(M,M)=0$ is
free.

\begin{theorem}
\label{m^2M}

Let $(R,\fm)$ be an Artinian local ring and $M$ a finitely generated
$R$-module with $\fm^2M=0$.

If $\Ext^i_R(M, M\oplus R)=0$ for all $i$ with $0<i\le \max \lbrace 3,
\nu (M), \nu (\fm M)\rbrace $, then $M$ is free.  \end{theorem}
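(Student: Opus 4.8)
The plan is to reduce immediately to the study of Tor and then run the machinery of Section \ref{Betti numbers}. Suppose $M$ is not free. The hypothesis $\Ext^i_R(M,R)=0$ for the relevant range of $i$ together with the fact that $R$ is Artinian (so $\omega$ is faithful) forces, via Matlis duality \ref{duality}, the vanishing $\Tor_i^R(M,\omega)=0$ for those $i$. Likewise $\Ext^i_R(M,M)=0$ translates into $\Tor_i^R(M,M^\vee)=0$, where $M^\vee=\Hom_R(M,\omega)$, and one checks that $\fm^2M^\vee=0$ since $\fm^2M=0$ (dualizing the inclusion $\fm^2M=0$). So we are in the situation of two modules annihilated by $\fm^2$ with a long initial stretch of vanishing Tor, and $N:=M^\vee$ is non-free precisely because $M$ is non-free. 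The first job is to be careful about the exact range: we want enough consecutive vanishing Tors to invoke Lemma \ref{gammas}(3), Lemma \ref{one}, and Lemma \ref{relations}.

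Next I would extract numerical constraints on $\gamma(M)$. From $\Tor_i^R(M,\omega)=0$ for $i\in[1,\nu(\omega)]$ — and $\nu(\omega)=\dim_k\Soc(R)\le \nu(\fm M)$ is the kind of bound the max in the statement is tuned to give — Lemma \ref{one}(1) yields $\gamma(M)\ge 1$. Using $\Tor_i^R(M,M^\vee)=0$ for $i$ in a range governed by $\log_2 b_1(M^\vee)$, which again is controlled by $\nu(M)$ (as $b_1(M^\vee)$ is a syzygy Betti number bounded polynomially in the data), Lemma \ref{one}(2) forces $\gamma(M)$ to be an integer. Then Lemma \ref{relations}(1), applied with the pair $(M,M^\vee)$ or $(M,\omega)$ once two consecutive Tors vanish, gives $b_1(M)=(e-\gamma(M))b_0(M)$, so in particular $\gamma(M)\le e-1$ as long as $M$ is non-free (strictly, since $b_1(M)>0$). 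Combining, $\gamma(M)$ is an integer in $[1,e-1]$.

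The crux is to derive a contradiction from these constraints, and this is where I expect the main obstacle to lie: the estimates above only pin $\gamma(M)$ down to a range, not a single value, so one more relation is needed. The natural source is Lemma \ref{sum} applied to the pair $(M,M^\vee)$: if $\Tor_2^R(M,M^\vee)=\Tor_1^R(M,M^\vee)=0$ then $\gamma(M)+\gamma(M^\vee)-\gamma(M\otimes_R M^\vee)=e$. One then needs to relate $\gamma(M^\vee)$ back to $\gamma(M)$ — plausibly $\gamma(M^\vee)=\gamma(M)$ by a duality/length symmetry, using $\lambda(M^\vee)=\lambda(M)$ and $\nu(M^\vee)=\dim_k\Soc(M)=\lambda(\fm M)$ versus $\nu(M)$ — and combine with $\gamma(M\otimes_R M^\vee)\ge 0$ to squeeze $\gamma(M)$, together with the self-pairing $\Ext^0$ giving a nonzero map $M\to M$, to push $\gamma(M)$ to a value incompatible with $b_1(M)=(e-\gamma(M))b_0(M)>0$. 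I would also keep Remark \ref{summands} in reserve: none of the syzygies $M_0,\dots$ in the vanishing range has $k$ as a summand, which is what licenses the use of \ref{properties}. Assembling these pieces — the integrality, the two linear relations, and the dualization identifying $M^\vee$'s invariants with $M$'s — should close the argument; the bookkeeping of exactly how many vanishing Tors each lemma consumes, and verifying the three quantities $3$, $\nu(M)$, $\nu(\fm M)$ in the max really dominate all the ranges invoked, is the delicate part.
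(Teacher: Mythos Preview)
Your setup is right --- translate to $\Tor_i^R(M,M^\vee)=0$ via Matlis duality and run the $\gamma$-machinery --- but the argument derails at the step you flag as the crux. The duality computation you sketch actually gives $\gamma(M^\vee)=\gamma(M)^{-1}$, not $\gamma(M^\vee)=\gamma(M)$: with $\nu(M^\vee)=\dim_k\Soc(M)=\lambda(\fm M)$ and $\lambda(M^\vee)=\lambda(M)$ one has $\gamma(M^\vee)=\nu(M)/\lambda(\fm M)=1/\gamma(M)$ (this is Remark \ref{inverse}). Combined with $\gamma(M)\ge 1$ and $\gamma(M^\vee)\ge 1$ from Lemma \ref{one}(1), this forces $\gamma(M)=\gamma(M^\vee)=1$ outright, so the integrality detour through Lemma \ref{one}(2) is unnecessary. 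Incidentally, your proposed application of Lemma \ref{one}(1) to the pair $(M,\omega)$ rests on an unjustified inequality $\nu(\omega)\le\nu(\fm M)$; the paper instead applies it symmetrically to the pair $(M,M^\vee)$, which is exactly what the two quantities $\nu(M)$ and $\nu(\fm M)=\nu(M^\vee)$ in the $\max$ are calibrated for.

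The more serious gap is the endgame. Plugging $\gamma(M)=\gamma(M^\vee)=1$ into Lemma \ref{sum} gives $e\le 2$, but this is \emph{not} yet a contradiction, and your proposed mechanism (``push $\gamma(M)$ to a value incompatible with $b_1(M)=(e-\gamma(M))b_0(M)>0$'') does not close it: with $\gamma(M)=1$ and $e=2$ that relation just reads $b_1(M)=b_0(M)$. The paper's proof of Proposition \ref{Ext}(1) instead invokes Scheja's classification (for $e\le 2$ the ring is a complete intersection or Golod), then disposes of the CI case via \cite[(1.8)]{ADS} and the Golod case by a Poincar\'e-series comparison using Lemma \ref{product}. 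This yields $\fm^2=0$; only then does $\Ext^i_R(M,R)=0$ enter, to show (via Remark \ref{m^2} applied to $\Tor_i^R(M,\omega)$) that $R$ is Gorenstein, whence $M$ injective is free. Your outline conflates these two stages and omits the external structural input needed to finish.
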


\begin{remark}
\label{inverse}
Assume that $\fm^2\ne 0$, $M\ne 0$ and $\fm^2M=0$. If
$\Ext_R^i(M,M)=0$ for some $i>0$, then
$\gamma(M^\vee)=\gamma(M)^{-1}$.

Indeed, \ref{duality} gives $\Tor_i^R(M,M^\vee)=0$. Since $\fm^2$ is
not zero, the modules $M^\vee$ and $M$ are not free, hence $k$ is not
a direct summand in either of them, cf.\ Remark \ref{summands}. Set
$r(M)=\rank_k\Soc(M)$. As noted in Remark \ref{socle},
$r(M)=\lambda(\fm M)$ and $r(M^\vee)=\lambda(\fm M^\vee)$. Using
Matlis duality we obtain
\[
\gamma(M^\vee)=\frac{\lambda(\fm
M^\vee)}{\nu(M^\vee)}=\frac{r(M^\vee)}{r(M)}=\frac{\nu(M)}{\lambda(\fm
M)}=\frac{1}{\gamma(M)}
\]
\end{remark}

\begin{proposition}
\label{Ext}
Let $(R,\fm)$ be an Artinian local ring. Let $M$ be a non-zero
finitely generated $R$-module such that $\fm^2M=0$. If any of the
following conditions holds:
\begin{enumerate}[\quad\rm(1)]
 \item $\Ext^i_R(M,M)=0$ for all $i$ with $0<i\le\max\{3,\nu(M),\nu(\fm
M)\}$
\item $\fm^3=0$ and $\Ext^{i}_R(M,M)=0$ for three consecutive
      values of $i>0$,
\end{enumerate}
then $\fm^2=0$, and $M$ is either free or injective.
\end{proposition}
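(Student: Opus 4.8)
The plan is to prove the contrapositive of the ``$\fm^2=0$'' part and feed the easy case $\fm^2=0$ at the end. First I would dispose of the case $\fm^2=0$: here Matlis duality (cf.\ \ref{duality}) converts the hypothesis into $\Tor_i^R(M,M^\vee)=0$ for some $i>1$ — in case (1) because $\max\{3,\nu(M),\nu(\fm M)\}\ge 3$, and in case (2) because the largest of the three consecutive indices is $\ge 3$. By Remark \ref{m^2} either $M$ or $M^\vee$ is free; and $M^\vee$ free means $M\cong M^{\vee\vee}$ is injective. So in this case $M$ is free or injective, which is the asserted conclusion.

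Next, assuming $\fm^2\ne 0$, I would derive a contradiction. Since $\fm^2M=0$ and $\fm^2\ne 0$, $M$ is not free; and $M^\vee$ is not free either, for otherwise $M\cong M^{\vee\vee}$ would be injective, hence $M\cong\omega^s$ with $s\ge 1$, which is impossible because $\omega$ is faithful and $\fm^2\omega\ne 0$. By \ref{duality} the hypothesis gives $\Tor_i^R(M,M^\vee)=0$ for three consecutive values of $i$, the largest being $\ge 3$ (in case (1) take $i=1,2,3$). Since $M$ and $M^\vee$ are non-free, Remark \ref{Soc(R)} (applied with the index $\ge 3$) gives $\Soc(R)=\fm^2$, so $\fm^3=0$ and $\dim_k\Soc(R)=\lambda(\fm^2)$. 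As $\fm^2M^\vee=0$ as well, Theorem \ref{3tors} applies to the pair $(M,M^\vee)$: $\gamma(M)$ and $\gamma(M^\vee)$ are positive integers with $\gamma(M)+\gamma(M^\vee)=e$ and $\gamma(M)\gamma(M^\vee)=\dim_k\Soc(R)$. But Remark \ref{inverse} gives $\gamma(M^\vee)=\gamma(M)^{-1}$; two reciprocal positive integers are both equal to $1$, so $\gamma(M)=\gamma(M^\vee)=1$, $\dim_k\Soc(R)=1$, and $e=2$. Thus $R$ must be Gorenstein with $\fm^3=0$ and $\edim R=2$, i.e.\ a codimension-two complete intersection with Hilbert function $(1,2,1)$.

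The remaining step — ruling out this last configuration — is the crux, and the one I expect to be the main obstacle. Over the codimension-two complete intersection $R$ we have $\Tor_i^R(M,M^\vee)=0$ for three (that is, $\operatorname{codim}R+1$) consecutive values of $i$; by the rigidity of $\Tor$ over complete intersections (cf.\ \cite{AB}) this forces $\Tor_i^R(M,M^\vee)=0$ for all $i\gg 0$, equivalently $V_R(M)\cap V_R(M^\vee)=\{0\}$ for the support varieties. Since $R$ is Gorenstein, $(-)^\vee=\Hom_R(-,\omega)\cong\Hom_R(-,R)$ is an exact duality on finitely generated modules, so $\Ext_R^*(M^\vee,M^\vee)\cong\Ext_R^*(M,M)$ and hence $V_R(M^\vee)=V_R(M)$. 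Therefore $V_R(M)=\{0\}$, so $M$ has complexity $0$, i.e.\ finite projective dimension; over the Artinian ring $R$ this makes $M$ free, contradicting the fact already established that $M$ is not free. (An alternative finish avoids support varieties: one shows such an $M$ is eventually periodic of period dividing $2$, so that three consecutive vanishing self-$\Ext$'s force $\Ext_R^i(M,M)=0$ for all $i>0$, and then $M$ is free as in Hoshino's theorem for self-injective algebras with radical cube zero.) This contradiction shows $\fm^2=0$, and then the first paragraph completes the proof that $M$ is free or injective.
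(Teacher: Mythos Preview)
Your treatment of case (2) is essentially correct and close to the paper's: once $\fm^3=0$ is assumed, Theorem \ref{3tors} applied to $(M,M^\vee)$ together with $\gamma(M^\vee)=\gamma(M)^{-1}$ forces $a=1$ and $e=2$, so $R$ is a codimension-two complete intersection, and the Avramov--Buchweitz machinery finishes. (The paper simply picks the even index $j$ among the three and invokes \cite[4.2]{AB} directly; your support-variety route is an acceptable variant.)

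The gap is in case (1). Both Remark \ref{Soc(R)} and Theorem \ref{3tors} live in Section \ref{Rings with m^3=0}, which assumes $\fm^3=0$ throughout; the proof of Remark \ref{Soc(R)} uses $\fm^2 M_{i-1}\subseteq\fm^3 R^{b_{i-2}}=0$, and its ``obvious'' inclusion $\fm^2\subseteq\Soc(R)$ is precisely the statement $\fm^3=0$. So you cannot cite that remark to \emph{deduce} $\fm^3=0$. Notice that your case (1) argument uses only $\Tor_i=0$ for $i=1,2,3$, discarding the vanishing up to $\max\{3,\nu(M),\nu(\fm M)\}$ --- and obtaining $\fm^3=0$ from finitely many Tor-vanishings of modules annihilated by $\fm^2$ is exactly the open Conjecture \ref{conjecture}. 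The paper never attempts this. Instead it applies Lemma \ref{one}(1) (valid over any Artinian ring, and requiring the full vanishing range since $\nu(M^\vee)=\nu(\fm M)$) to get $\gamma(M)\ge 1$ and $\gamma(M^\vee)\ge 1$, hence both equal $1$ by Remark \ref{inverse}; then Lemma \ref{sum} gives $e\le 2$, and Scheja's dichotomy reduces to the complete-intersection case (handled via \cite[(1.8)]{ADS}) or the Golod-not-hypersurface case (eliminated by comparing $b_3(M\otimes_R M^\vee)$ from Lemma \ref{product} against $P_k^R$).
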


\begin{proof} By \ref{duality} we have $\Tor_i(M,M^\vee)=0$ for all
$i$ as in the statement.

If $\fm^2=0$, then Remark \ref{m^2} implies $M$ or $M^\vee$ is free,
hence $M$ is free or injective.

>From now in we assume $\fm^2\ne 0$. This implies, in particular, that
neither $M$ nor $M^\vee$ is free. By Remark \ref{summands}, $k$ is not
a direct summand in $M$ or $M^\vee$.  Matlis duality and Remark
\ref{socle} then yield $\nu(\fm M)=\nu(M^\vee)$, while Remark
\ref{inverse} gives $\gamma({M^\vee})=\gamma(M)^{-1}$.  Proposition
\ref{one}, respectively Theorem \ref{gammas}(1), give then
$\gamma(M)\ge 1$ and $\gamma({M^\vee})\ge 1$, and we conclude
$\gamma(M)=\gamma({M^\vee})=1$.

We use the notation of the previous sections: $e=\nu(\fm)$ and
$a=\dim_k\Soc(R)$.

Assume that $M$ satisfies (1). By Lemma \ref{sum} we have
$2-\gamma(M\otimes_RM^\vee)=e$, hence $e\le 2$.

By Scheja \cite[Satz 9]{Sch} the ring $R$ is then either a complete
intersection, or a Golod ring. If it is a complete intersection, then
$\Ext^2_R(M,M)=0$ implies $M$ is free, by Auslander, Ding, and Solberg
\cite[(1.8)]{ADS}, a contradiction. If it is Golod, but not a
hypersurface, then $e=2$ and $\gamma(M\otimes_RM^\vee)=0$.  Lemma
\ref{gammas}(1) yields then $b_i(M)=\nu(M)$ and
$b_i(M^\vee)=\nu(M^\vee)$ for all $i=1,2,3$. Since $R$ is Golod,
$P^R_k(t)=(1+t)(1-t-lt^2)^{-1}$ with $l\ge 1$, hence $b_3(k)=2+3l\ge
5$. Since $M\otimes_RM^\vee$ is a sum of copies of $k$, we have then
$$\beta_3(M\otimes_RM^\vee)=b_3(k)\nu(M)\nu(M^\vee)\ge
5\nu(M)\nu(M^\vee)$$ On the other hand, Lemma \ref{product} gives
$\beta_3(M\otimes_RM^\vee)=4\nu(M)\nu(M^\vee)$, and this leads to a
contradiction.

Assume that $M$ satisfies (2). Using \ref{3tors}(4) we obtain $a=1$
and $e=\gamma(M)+\gamma({M^\vee})=2$. Thus, $R$ is Gorenstein and it
follows that it is a complete intersection. Let $j$ be an even
integer among the three consecutive integers in the hypothesis. The
hypothesis that $\Ext^{j}_R(M,M)=0$ implies $M$ is free, cf. Avramov
and Buchweitz \cite[4.2]{AB}, a contradiction.
\end{proof}

\begin{proof}[Proof of Theorem {\rm \ref{m^2M}}] Assume that $M$ is
not free. Proposition \ref{Ext}(1) then shows $\fm ^2=0$ and $M$ is
injective. By \ref{duality}, $\Ext^i_R(M, R)=0$ implies $\Tor^R _i(M,
\omega)=0$ and Remark \ref{m^2} shows that $\omega$ is free, hence $R$
is Gorenstein. In this case, any finiteley generated $R$-module, and
in particular $M$, is also free, a contradiction.
\end{proof}

The proof of the first part of Theorem \ref{AR} is similar, so we
give it here:

\begin{proof}[Proof of Theorem {\rm \ref{AR}(1)}] The hypothesis
implies $\Ext^i_R(M_1,M_1)=0$ for three
    consecutive values of $i>0$. Since $\fm^2M_1=0$, we proceed as
    above, using Proposition \ref{Ext}(2).
\end{proof}

In order to prove part (2) of Theorem \ref{AR} we use the fact that
for Gorenstein Artinian rings one can define negative Betti numbers
and negative syzygies.

\begin{chunk}
\label{negative}
Let $R$ be a Gorenstein Artinian local ring, $F$ a free resolution of
$M$ and $G$ a free resolution of $M^*$. Note that the complex $G^*$ is
acyclic, with $\HH_0(G^*)=M^{**}\cong M$. Gluing together the
complexes $F$ and $G^*$, we obtain an exact complex $P$, which is
called a {\it
  complete} resolution of $M$.

Furthermore, if $M$ is a first syzygy in a minimal free resolution of
some other module, and the resolutions $F$ and $G$ are chosen to be
minimal, then the complex $P$ is minimal.  In this case, we have
$\rank(P_i)=b_i(M)$ for all $i\ge 0$ and $M_i=\Coker\dd_{i}^P$. In
general, the Betti numbers and syzygies of $M$ are defined by setting
$b_i(M)=\rank(P_i)$ and $M_i=\Coker\dd_{i}^P$ for all integers
$i$. Note that for any $j\ge i$ the module $M_j$ is a $(j-i)$'th
syzygy of $M_i$; in our notation: $M_j=(M_i)_{j-i}$.
\end{chunk}

\setcounter{theorem}{1}
\setcounter{subchunk}{0}

\begin{proof}[Proof of Theorem {\rm \ref{AR}(2)}]
  Assume that $M$ is not free. Since $R$ is Gorenstein, the hypothesis
  implies $\Ext^i_R(M_1,M_1)=0$. Replacing $M$ by $M_1$, we may
  assume $\fm^2M=0$. We can now use the notation of \ref{negative}.
  The assumption that $M$ is not free implies that both $M$ and $M^*$
  have infinite projective dimension, hence $M_j$ is not free for any
  $j$.

  For all $j$ we have $\Ext^i_R(M_j,M_j)=0$ and $\fm^2 M_j=0$. By
  \ref{duality} we then have $\Tor_i^R(M_j,M_j^*)=0$. (Since $R$ is
  Gorenstein, we have $\omega\cong R$, hence $M_j^*\cong
  M_j^\vee$). In particular, $k$ is not a direct summand in any of
  the $M_j$'s.

  We set $e=\nu(\fm)$ and $b_j=b_j(M)$ for all $j$. Since $R$ is
  Gorenstein, $\Soc(R)$ is $1$-dimensional. We use Lescot's results
  recalled in \ref{properties} to get:
\begin{equation}
\label{recurrence}
b_j+b_{j-1}=eb_{j-2} \quad \text{and}\quad \nu(\fm M_j)=b_{j-1}\quad
\text{for all}\quad j
\end{equation}

Recall that $\gamma(M)=\lambda(\fm M)/\nu(M)$. Using
(\ref{recurrence}) and Remark \ref{inverse} we have:
\begin{equation}
\label{gamma}
\gamma(M_j)=\frac{b_{j-1}}{b_j} \quad \text{and}\quad
\gamma(M_j^*)=\frac{b_{j}}{b_{j-1}} \quad \text{for all}\quad j
\end{equation}

Since $\Tor_i^R(M_j,M_j^*)=0$ for all $j$, Lemma \ref{gammas} yields:
\begin{equation}
\label{betta}
b_{i+j}=\big(\gamma(M_j^*)-\gamma(M_{j+i-1}\otimes_R M_j^*)\big)b_{j+i-1}
\end{equation}

For all $j$ we obtain:
\[
b_{j+i}\le \gamma(M_j^*)b_{j+i-1}=\frac{b_{j}}{b_{j-1}}\, b_{j+i-1}
\]
where the inequality comes from (\ref{betta}) and the equality from
(\ref{gamma}). Equivalently:
\[
\frac{b_{j+i}}{b_{j+i-1}}\le \frac{b_j}{b_{j-1}} \quad\text{for
all}\quad j
\]

Each $j=0, 1,\cdots, i-1$ yields thus a non-increasing sequence $(
b_{j+ni}/b_{j+ni-1})_{n}$. Let $L_j$ denote the limit of the $j$'th
sequence.

If $L_j <1$ for some $j=0, 1, \ldots, i-1$ it follows that there
exists an eventually strictly decreasing subsequence of $\{ b_n \}$.
This implies that $b_n=0$ for some $n\gg 0$, a contradiction.

Thus, $L_j \ge 1$ for all $j$. This implies $b_n\le b_{n+1}$ for all
$n$. If $b_{n_0} < b_{n_0+1}$ for some $n_0$, then we obtain
$$
1 < \frac{b_{n_0 +1}}{b_{n_0}} \le \frac{b_{n_0 -i +1}}{b_{n_0-i}}
\le \cdots
$$
hence
\[
b_{n_0+1} > b_{n_0} \ge b_{n_0-i+1} > b_{n_0 -i } \ge
b_{n_0-2i+1}> b_{n_0-2i} \ge \cdots
\] It follows that $b_n=0$ for some $n\ll 0$, a contradiction. In
conclusion, $b_n=b_{n+1}$ for all $n$.  We use then (\ref{recurrence})
to obtain $e=2$. It follows that $R$ is a complete intersection. Since
the Betti numbers of any $M_j$ are constant, by a result of Eisenbud
\cite{Ei} then $M_{j+n}\cong M_{j+n+2}$ for all $n>0$. We conclude
$M_j\cong M_{j+2}$ for all $j$.

If $i$ is even, then $M$ has finite projective dimension by \cite{AB},
hence it is free, a contradiction.

If $i$ is odd, then $M\cong M_{-i+1}$, hence
\[
\Ext^1_R(M,M)\cong\Ext^1_R(M,M_{-i+1})\cong\Ext^i_R(M,M)=0
\]

Since the Betti numbers of $M$ are constant, (\ref{gamma}) gives
$\gamma(M^*)=1$. Taking $i=1$ and $j=0$ in (\ref{betta}) we get
$b_{1}=\big(1-\gamma(M\otimes_R M^*)\big)b_{0}$, and it follows
$\gamma(M\otimes_R M^*)=0$. This means that $\fm(M\otimes_RM^*)=0$.
However, $M\otimes_R M^*$ is the Matlis dual of $\Hom_R(M,M)$ and the
later is annihilated by $\fm$ only when $\fm M=0$. In view of the
hypothesis, this implies that $M$ is free,which provides the desired
contradiction.
\end{proof}

\section{Vanishing of Tor and Loewy length} \label{Vanishing of Tor
and the Loewy length}

In this section we assume that $(R,\fm,k)$ is an Artinian local ring.
We recall that the Loewy length of $R$, denoted $\ell\ell(R)$, is the
largest integer $h$ with $\fm^h\ne 0$.

We propose the following conjecture:

\begin{conjecture}
\label{conjecture}
Assume that $M$, $N$ are nonzero modules with $\fm^2M=\fm^2N=0$.

If $\Tor_i^R(M,N)=0$ for all $i>0$, then $\fm^3=0$.
\end{conjecture}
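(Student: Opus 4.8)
The plan is to exploit the explicit formula for the Poincaré series of $k$ provided by Lemma \ref{Poincare}. Under the hypotheses $\fm^2M=\fm^2N=0$ and $\Tor_i^R(M,N)=0$ for all $i>0$, that lemma gives
\[
P_k^R(t)=\frac{1-\gamma(M\otimes_RN)t}{\big(1-\gamma(M)t\big)\big(1-\gamma(N)t\big)}.
\]
The first point is that $P_k^R(t)$ here is a rational function whose denominator has degree at most $2$; this is a very restrictive condition. I would combine it with Theorem \ref{3tors} (applicable since $M,N$ are non-free with $\fm^2M=\fm^2N=0$, and infinitely many Tors vanish), which tells us that $\gamma(M)$ and $\gamma(N)$ are \emph{positive integers} satisfying $\gamma(M)+\gamma(N)=e$ and $\gamma(M)\gamma(N)=a$, where $e=\nu(\fm)$ and $a=\dim_k\Soc(R)$ — \emph{provided} $\fm^3=0$. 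So the real content is to derive $\fm^3=0$ from the shape of $P_k^R(t)$ alone, without assuming it; I would instead use Theorem \ref{3tors} and its consequences applied to syzygies (which only require the vanishing of Tor, not $\fm^3=0$ — note Lemma \ref{gammas}, Lemma \ref{Poincare}, etc. are stated for general Artinian $R$).

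The key step is a comparison of two expressions for $P_k^R(t)$. On one hand, we have the rational expression above with denominator $(1-\gamma(M)t)(1-\gamma(N)t)$. On the other hand, for \emph{any} local ring there is the coefficientwise inequality $P_k^R(t)\succeq \dfrac{1}{1-et+\big(\binom{e+1}{2}-\text{(number of quadratic relations)}\big)t^2-\cdots}$, and more usefully, the Löfwall/Roos-type lower bounds and the fact that $\ell\ell(R)\ge 3$ forces genuine contributions in degree $3$ and beyond to the minimal $R$-free resolution of $k$. Concretely, I would argue: if $\ell\ell(R)\ge 3$, pick $x\in\fm^2$ with $x\ne 0$; then the Koszul-type syzygetic behavior of $k$ detects $\fm^3$ through the subadditivity of the $b_i(k)$. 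The cleanest route is probably to show that the denominator $(1-\gamma(M)t)(1-\gamma(N)t)$ is incompatible with $\ell\ell(R)\ge 4$: expand $1/P_k^R(t)=(1-\gamma(M)t)(1-\gamma(N)t)/(1-\gamma(M\otimes_RN)t)$ as a power series and observe that $1/P_k^R(t)\in 1-et+\dots$ has a specific structure — in particular its coefficients are eventually periodic (indeed $P_k^R$ is rational with a degree-$\le 2$ denominator and degree-$\le 1$ numerator), which by a theorem on Poincaré series (e.g. the characterization of rings whose $P_k^R$ is rational of this special form) pins down $R$ to the classes listed in the introduction — complete intersections of codimension $\le 2$, or rings with $\fm^3=0$ — and in each of those classes one checks $\fm^3=0$ directly or derives a contradiction.

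I expect the main obstacle to be ruling out the case $\ell\ell(R)\ge 3$ in a uniform way: the formula for $P_k^R(t)$ constrains $R$ severely but does not, by itself, obviously force the Loewy length down, and the complete-intersection-of-codimension-$2$ case (where $\fm^3$ need not be zero) must be eliminated using that \emph{both} $M$ and $N$ are annihilated by $\fm^2$ — over a codimension-$2$ complete intersection one would show that a module annihilated by $\fm^2$ cannot have all Tor against another such module vanish unless one of them is free, contradicting our standing assumption. Handling the graded case, where one has the Hilbert-series bookkeeping $\Hilb_R(t)=1+et+\lambda(\fm^2)t^2+\cdots$ available alongside $P_k^R(t)$, is cleaner and is exactly Theorem \ref{graded}; the local case likely requires either the standard reduction to the associated graded ring or an extra argument to control $\ell\ell(R)$, and that is where I would concentrate the effort.
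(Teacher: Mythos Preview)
The statement you are attempting to prove is presented in the paper as an \emph{open conjecture}; the paper does not prove it in general. What the paper establishes is the standard graded case (Theorem \ref{graded}), together with the remark that Lemma \ref{Poincare} already settles the conjecture for large classes of rings (complete intersections of codimension $\ne 2$, Koszul rings, Golod rings, rings with irrational Poincar\'e series, etc.). So there is no full proof in the paper to compare against.

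Your proposal has a genuine gap. The crux of your argument is that a Poincar\'e series of the shape $(1-at)/\big((1-bt)(1-ct)\big)$ ``by a theorem on Poincar\'e series \dots\ pins down $R$'' to be either a complete intersection of codimension $\le 2$ or a ring with $\fm^3=0$. No such classification theorem is known; the rationality and low degree of $P_k^R(t)$ are restrictive but do not by themselves bound $\ell\ell(R)$. The paper uses exactly this Poincar\'e-series observation to dispose of many cases and then stops, precisely because the remaining cases are not handled by the shape of $P_k^R(t)$ alone. You yourself flag the obstacle (``ruling out $\ell\ell(R)\ge 3$ in a uniform way''), but the sketch that follows --- Koszul-type syzygies, subadditivity of $b_i(k)$, eventual periodicity of coefficients --- does not supply the missing argument. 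Note also that Theorem \ref{3tors}, which you invoke for the relations $\gamma(M)+\gamma(N)=e$ and $\gamma(M)\gamma(N)=a$, has $\fm^3=0$ as a standing hypothesis and so is unavailable here; you correctly observe this, but it means none of its conclusions can be used as input.

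For contrast, the paper's proof in the graded case works by a different and more concrete mechanism: Lemma \ref{zero} shows that the maps $\Tor_i^R(k,\fm^2)\to\Tor_i^R(k,\fm)$ vanish for all $i$, and Roos's criterion then forces $\Ext_R^*(k,k)$ to be generated in degree $1$, i.e.\ $R$ is Koszul. This gives $P_k^R(t)=\Hilb_R(-t)^{-1}$, and comparison with Lemma \ref{Poincare} forces $\deg\Hilb_R(t)\le 2$, hence $\fm^3=0$. The Koszul property is the bridge between the shape of $P_k^R(t)$ and the Loewy length, and it is available only through the graded hypothesis. Your proposal has no substitute for this step in the local case, which is exactly why the conjecture remains open.
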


\begin{chunk}
  One can ask a more general question: Assume that $M$, $N$ are
  nonzero modules with $\fm^pM=\fm^qN=0$. If $\Tor_i^R(M,N)=0$ for all
  $i>0$, does it follow that $\fm^{p+q-1}=0$? This is trivially true
  when $p=1$ or $q=1$, or when one of $p$, $q$ is greater than
  $\ell\ell(R)$. The conjecture takes up the case $p=2=q$.
\end{chunk}

\begin{chunk}
  In view of Lemma \ref{Poincare}, the conjecture holds whenever
  $\fm^3=0$ or $P^R_k(t)\ne (1-at)(1-bt)^{-1}(1-ct)^{-1}$ with
  rational numbers $a\ge 0$ and $b,c>0$.  The class of such rings
  include: complete intersection rings of codimension different from
  $2$, generalized Golod rings, Koszul rings, rings with irrational
  Poincar\'e series and many others. More evidence for the conjecture
  can also be gathered from the preceding two sections.
\end{chunk}

Theorem \ref{graded} below establishes the conjecture in yet another
important case.

We say that the local ring $R$ is standard graded if it has a
decomposition $R=R_0\oplus R_1\oplus\dots\oplus R_h$ such that
$R_iR_j\subseteq R_{i+j}$ for all $i,j\in[0,h]$, $R_0=k$ and
$R=R_0[R_1]$ (it is thus generated in degree one).

\begin{theorem}
\label{graded}
Let $R$ be a standard graded local ring, and let $M$, $N$ be non-zero
finitely generated $R$-modules satisfying $\fm^2M=\fm^2N=0$.

If $\Tor_i^R(M,N)=0$ for all $i>0$, then
$\fm^3=0$.
\end{theorem}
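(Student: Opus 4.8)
**The plan is to combine the Poincaré-series constraint from Lemma \ref{Poincare} with the extra rigidity available for graded rings, namely the Koszul/regularity machinery.** By Lemma \ref{Poincare}, the vanishing of all positive Tors forces
\[
P^R_k(t)=\frac{1-\gamma({M\otimes_RN})t}{\big(1-\gamma(M)t\big)\big(1-\gamma(N)t\big)}\,,
\]
so $P^R_k(t)$ is a rational function whose denominator has degree two. I would first observe that the hypotheses $\fm^2M=\fm^2N=0$ with $M,N\ne 0$ force both $\gamma(M)$ and $\gamma(N)$ to be strictly positive (since $M,N$ are not free, as $\fm^2\ne 0$ would be the interesting case; if $\fm^2=0$ there is nothing to prove), and $\gamma(M\otimes_RN)\ge 0$. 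So the numerator has degree exactly $1$ and the denominator degree exactly $2$. The goal is to show this shape of Poincaré series is incompatible with $R$ being standard graded unless $\fm^3=0$.

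\textbf{The key step is a rationality/regularity argument for graded rings.} For a standard graded ring, $P^R_k(t)$ is again a power series, and the point is that its radius of convergence and the locations of its poles are controlled. I would use the fact that over a standard graded ring the residue field $k$ has a resolution whose generators sit in degrees governed by the Castelnuovo–Mumford regularity; more concretely, a theorem of Avramov–Peeva (or the Backelin rationality results) shows that if $P^R_k(t)$ has a denominator that is a product of linear factors $(1-bt)(1-ct)$ with $b,c>0$, then $R$ must be Koszul, and a Koszul standard graded ring is generated by quadrics with relations generated in degree two. One then shows that a Koszul ring with $\fm^3\ne 0$ cannot have $P^R_k(t)$ of the special form above: for Koszul rings $P^R_k(t)\cdot H_R(-t)=1$ where $H_R(t)=\sum_i \dim_k(\fm^i/\fm^{i+1})t^i$ is the Hilbert series, so $P^R_k(t)=1/H_R(-t)$, and $H_R(-t)=1-et+\lambda(\fm^2)t^2-\dots$. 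If $\fm^3\ne 0$ the Hilbert series has degree $\ge 3$, so $1/H_R(-t)$ cannot be written with a degree-one numerator over a degree-two denominator unless the higher Hilbert-function terms conspire to cancel — and a short direct computation rules that out because the coefficients of $H_R$ are nonnegative and $H_R(1)=\lambda(R)>1+e$.

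\textbf{The main obstacle I anticipate is showing that the special form of $P^R_k(t)$ actually forces $R$ to be Koszul} (or otherwise forces the needed structure) — the rational-function shape alone is a statement about the series, and translating it into a structural statement about the graded ring requires invoking the right rationality theorem and checking its hypotheses apply in our generality (no assumption that $M,N$ themselves are graded, for instance). I would handle the possible non-graded $M,N$ by passing to associated graded modules or by noting that $\gamma$ is computed from lengths, which are insensitive to the grading, so the length/Betti-number identities of Section \ref{Betti numbers} still yield the same $P^R_k(t)$. Once Koszulness (or the relevant consequence) is in hand, the final contradiction is the elementary Hilbert-series computation sketched above. A secondary subtlety: one must separately dispose of the degenerate cases $\gamma(M\otimes_RN)=\gamma(M)$ or $\gamma(M\otimes_RN)=\gamma(N)$, where a cancellation drops the denominator to degree one — there $P^R_k(t)=(1-\gamma t)^{-1}$, forcing $b_i(k)$ to be eventually constant or to grow polynomially of degree zero, which by standard results (e.g.\ a hypersurface, $\fm^2=0$, or $R$ regular) again gives $\fm^3=0$ or a contradiction with $M,N$ non-free.
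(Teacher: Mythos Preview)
Your endgame is correct and matches the paper's: once $R$ is Koszul, the identity $P^R_k(t)=\Hilb_R(-t)^{-1}$ combined with Lemma~\ref{Poincare} forces $\Hilb_R(t)$ to have degree at most~$2$, hence $\fm^3=0$. The problem is the step before that. There is no theorem of Avramov--Peeva or Backelin asserting that a standard graded ring whose Poincar\'e series happens to have the shape $(1-at)\big/\big((1-bt)(1-ct)\big)$ must be Koszul. Rationality results go the other way (Koszul implies a specific rational form), and the Avramov--Peeva regularity criterion requires knowing that the \emph{graded} resolution of $k$ is linear, which is information about the internal degrees of the syzygies, not about the Betti numbers alone. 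The Poincar\'e series is simply too coarse an invariant to detect Koszulness; indeed, the paper itself (see the remarks after Conjecture~\ref{conjecture}) isolates exactly this rational form as the one case \emph{not} handled by the Poincar\'e-series argument.

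The paper closes this gap by a different route that does not pass through $P^R_k(t)$ at all. It proves directly that $R$ is Koszul by showing that the Yoneda algebra $\Ext^*_R(k,k)$ is generated in degree~$1$, via Roos's criterion: it suffices that the map $\Ext^i_R(k,k)\to\Ext^i_R(R/\fm^2,k)$ induced by $R\twoheadrightarrow R/\fm^2$ vanish for all $i$, equivalently that $\Tor_i^R(k,\mu_{\fm})=0$ where $\mu_{\fm}\colon\fm^2\hookrightarrow\fm$. To obtain this, the paper first proves an auxiliary lemma (Lemma~\ref{zero}) showing, by a five-lemma chase on connecting homomorphisms, that the Tor-vanishing hypothesis on $M,N$ forces $\Tor_i^R(k,\mu_{N_1})=0$ for all $i$. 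Then Lemma~\ref{relations}(2) gives $\fm N_1=\fm^2 R^{b_0(N)}$, and a commutative square transports the vanishing from $\mu_{N_1}$ to $\mu_{\fm}$. This homological argument is the real content of the theorem, and it is precisely what your outline is missing; the ``rationality theorem'' you invoke would, if it existed, make the theorem nearly trivial.
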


Let $L$, $U$ be any two $R$-modules. We set $\ov L=L/\fm L$ and we
consider the
short exact sequence
$$
0\lra \fm L\xrightarrow{\mu_{L}} L\lra \ov L\lra 0
$$
and the induced long exact sequence:

$$\dots\to\Tor_{i+1}(U,\ov L)\xrightarrow{\Delta_i(U,L)}\Tor_i^R(U,\fm
L)\xrightarrow{\Tor_i^R(U,\mu_L)}\Tor_i^R(U,L)\to\dots $$ where
$\Delta_i(U,L)$ denote the connecting homomorphisms.

\begin{lemma}
\label{zero}
Assume that $\fm^2M=0$ and $N$ is not free. Let $j\ge 2$ be an
integer.

If $\Tor_i^R(M,N)=0$ for all $i\in [1,j]$, then $\Tor_i^R(k,\mu_N)=0$
for all $i\in [0,j-1]$.

\end{lemma}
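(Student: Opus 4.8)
The plan is to induct on $i$ and prove the equivalent statement that $\Tor_i^R(\ov M,\mu_N)=0$ for $i=0,1,\dots,j-1$, where $\ov M=M/\fm M$. This is equivalent to the assertion of the lemma because (using that $M\ne0$) the $R$-module $\ov M$ is a $k$-vector space, hence $\ov M\cong k^{\nu(M)}$, so $\Tor_i^R(\ov M,\mu_N)$ is a direct sum of copies of $\Tor_i^R(k,\mu_N)$. The base case $i=0$ needs no hypothesis: $\Tor_0^R(\ov M,\mu_N)=\ov M\otimes_R\mu_N$ is induced by the inclusion $\fm N\into N$, and the image of $\fm N$ inside $\ov M\otimes_RN$ lands in $\fm(\ov M\otimes_RN)$, which is $0$ since $\fm\ov M=0$.

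The engine of the induction is the short exact sequence $0\lra\fm M\lra M\lra\ov M\lra 0$. Because $\fm^2M=0$, \emph{both} $\fm M$ and $\ov M$ are $k$-vector spaces, so $\fm M\cong k^{\lambda(\fm M)}$ and $\ov M\cong k^{\nu(M)}$, and the functors $\Tor_i^R(\fm M,-)$ and $\Tor_i^R(\ov M,-)$ are, functorially, direct sums of copies of $\Tor_i^R(k,-)$. I would apply the homological functors $\Tor_\bullet^R(-,\fm N)$ and $\Tor_\bullet^R(-,N)$ to this sequence and link the two resulting long exact sequences by the morphism induced by $\mu_N\colon\fm N\to N$. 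Naturality of the connecting homomorphism then gives, for every $i\ge 1$, a commutative square
\[
\xymatrix{
\Tor_i^R(\ov M,\fm N)\ar[r]^{\partial_i}\ar[d]_{\Tor_i^R(\ov M,\mu_N)}&\Tor_{i-1}^R(\fm M,\fm N)\ar[d]^{\Tor_{i-1}^R(\fm M,\mu_N)}\\
\Tor_i^R(\ov M,N)\ar[r]^{\partial'_i}&\Tor_{i-1}^R(\fm M,N)
}
\]
where $\partial_i$ and $\partial'_i$ are the connecting maps of the upper and lower long exact sequences. This is where the hypothesis enters: exactness of the lower sequence at $\Tor_i^R(\ov M,N)$ gives $\ker\partial'_i=\Ima\big(\Tor_i^R(M,N)\to\Tor_i^R(\ov M,N)\big)$, so the vanishing $\Tor_i^R(M,N)=0$ forces $\partial'_i$ to be \emph{injective}, and this holds for every $i\in[1,j]$.

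To finish, fix $i$ with $1\le i\le j-1$ and assume inductively that $\Tor_{i-1}^R(k,\mu_N)=0$ (this is the base case when $i=1$). Since $\fm M$ is a $k$-vector space, $\Tor_{i-1}^R(\fm M,\mu_N)=0$ as well, so commutativity of the square gives $\partial'_i\circ\Tor_i^R(\ov M,\mu_N)=\Tor_{i-1}^R(\fm M,\mu_N)\circ\partial_i=0$; as $\partial'_i$ is injective (here we use $i\le j-1\le j$), we conclude $\Tor_i^R(\ov M,\mu_N)=0$, that is, $\Tor_i^R(k,\mu_N)=0$. This completes the induction. I expect the only genuine obstacle to be conceptual: seeing that the hypothesis $\Tor_i^R(M,N)=0$ should be fed into the sequence $0\to\fm M\to M\to\ov M\to 0$ through the \emph{first} variable of Tor, whose outer terms are $k$-vector spaces precisely because $\fm^2M=0$. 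Once that is in place the proof is a formal diagram chase, requiring none of the Betti-number or length estimates (the functions $\gamma$, etc.) used elsewhere in the section.
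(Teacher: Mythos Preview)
Your proof is correct and follows the same inductive strategy as the paper: both arguments exploit the short exact sequence $0\to\fm M\to M\to\ov M\to 0$ in the first variable, together with the fact that $\fm^2M=0$ makes both ends $k$-vector spaces, to propagate information about $\mu_N$ from degree $i-1$ to degree $i$.

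The execution differs in a way worth noting. The paper proves the equivalent statement that the connecting maps $\Delta_i(k,N)$ are surjective; to do this it sets up a $4\times 2$ ladder whose rows come from $0\to\fm M\to M\to\ov M\to 0$ applied to $\ov N$ and $\fm N$, whose columns are the connecting maps $\Delta(-,N)$, and then applies the Five Lemma (this also requires the Cartan--Eilenberg anticommutativity of the interior square, since both sets of maps are connecting homomorphisms). Your argument is more economical: by working with $\mu_N$ itself rather than the connecting map $\Delta(-,N)$, the relevant square commutes by ordinary naturality, and the single observation that $\partial'_i$ is injective (from $\Tor_i^R(M,N)=0$) replaces the Five Lemma entirely. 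The upshot is the same; your route just uses less machinery.
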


\begin{proof}

  (1) We will show, equivalently, that $\Delta_i(k,N)$ is surjective
      for all $i\in[0,j-1]$.
  Of course, this is true when $i=0$. We prove the claim by induction.
  Assume that it is true for $i=n$ and we prove it for $i=n+1$, with
  $0\le n<j-1$.

  In the diagram below the horizontal lines are long exact sequences
  of the type considered above.
\[
\xymatrixrowsep{2.2pc} \xymatrixcolsep{0.9pc}
\xymatrix{
\Tor^R_{n+2}(M,\ov{N})\ar@{->}[r]\ar@{->}[d]^{\cong}_{\Delta_{n+1}(M,N)}&\Tor^R_{n+2}(\ov
M,
\ov{N})\ar@{->}[r]\ar@{->}[d]_{\Delta_{n+1}(\ov M,N)}&\Tor^R_{n+1}(\fm
M,\ov{N})\ar@{->}[r]\ar@{->>}[d]_{\Delta_n(\fm M,N)}&\Tor^R_{n+1}(M,
\ov{N})\ar@{^{(}->}[d]_{\Delta_n(M,N)}\\
\Tor^R_{n+1}(M,\fm N)\ar@{->}[r]&\Tor^R_{n+1}(\ov M, \fm
N)\ar@{->}[r]&\Tor^R_n(\fm M, \fm N)\ar@{->}[r]&\Tor_n^R(M,\fm N)
}
\]
By \cite[?]{CE} the exterior squares commutes and the interior one
anticommutes. The map $\Delta_{n+1}(M,N)$ is bijective because
$\Tor_{n+2}^R(M,N)=\Tor_{n+1}^R(M,N)=0$. Also, the map $\Delta_n(\fm
M,N)$ is surjective by the induction hypothesis, using the fact that
$\fm M$ is a finite direct sum of copies of $k$, and $\Delta_n(M,N)$
is injective because $\Tor_{n+1}^R (M,N)=0$. By the ``Five Lemma'' we
conclude that the map $\Delta_{n+1}(\ov M,N)$ is surjective.  Since
$\ov M$ is a finite direct sum of copies of $k$, we obtain that
$\Delta_{n+1}(k,N)$ is surjective, and this finishes the induction
argument.
\end{proof}

\begin{proof}[Proof of Theorem {\rm \ref{graded}}]

  By Lemma \ref{zero} we have $\Tor_i^R(k,\mu_{N_1})=0$ for all $i\ge
  0$. Choose such an $i$ and set $F=R^{b_0(N)}$. By \ref{relations}(2)
  we have $\fm N_1=\fm^2 F$. Consider the following commutative
  diagram, in which the vertical maps are induced by the inclusion
  $N_1\into \fm F$:
\[
\xymatrixrowsep{2.0pc} \xymatrixcolsep{5.7pc}
\xymatrix{
\Tor_i^R(k, \fm N_1)
\ar@{->}[r]^{\Tor_i^R(k,\mu_{N_1})=0}\ar@{->}[d]^{\cong}&\Tor_i^R(k,
N_1)\ar@{->}[d]\\
\Tor_i^R(k,\fm^2F)\ar@{->}[r]^{\Tor_i^R(k,\mu_{\fm F})}&\Tor_i^R(k,\fm F)
}
\]
We conclude that $\Tor_i^R(k,\mu_{\fm F})=0$, hence
$\Tor_i^R(k,\mu_{\fm})=0$ for all $i$. Equivalently, the map
$\Ext^i_R(k,k)\to\Ext^i_R(R/\fm^2,k)$ induced by the projection $R\to
R/\fm^2$ is zero for all $i$, hence \cite [Corollary 1] {Ro} implies
that the algebra $\Ext_R^*(k,k)$ (with Yoneda product) is generated by
its elements of degree $1$.  This means that the $k$-algebra $R$ is
Koszul, hence $P_k^R(t)=\Hilb_R(-t)^{-1}$, cf.\ \cite[Theorem
1.2]{Lo}. Comparing with the relation of Remark \ref{Poincare} we
conclude that $\Hilb_R(t)$ is a polynomial of degree at most $2$,
hence $\fm^3=0$.
\end{proof}

\section{Rings of type at most $2$}
\label{Rings of type at most 2}

In this section we show that the conjecture of Tachikawa stated in the
introduction holds for Cohen-Macaulay rings of type at most $2$.

\begin{theorem}
\label{tor22}
Let $R$ be a Cohen-Macaulay local ring with a canonical module
$\omega$ and such that $\type(R)\le 2$.
\begin{enumerate}[\quad\rm(1)]
\item If $\Tor_2^R(\omega,\omega)=0$, then $R$ is Gorenstein.
\item If $\Ext^i_R(\omega, R)=0$ for $i=1,2$, then $R$ is Gorenstein.
\end{enumerate} \end{theorem}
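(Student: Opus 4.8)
The plan is to reduce to the Artinian case and then argue along the lines of the proof of Theorem \ref{omega}, with the hypothesis $\type R\le 2$ doing the structural work that $\fm^3=0$ does there. For the reduction, choose a maximal $R$-regular sequence $\underline x$; since $\omega$ is maximal Cohen-Macaulay it is $\omega$-regular, $\bar\omega=\omega/\underline x\omega$ is the canonical module of the Artinian ring $\bar R=R/\underline xR$, $\type\bar R=\type R$, and $\bar R$ is Gorenstein iff $R$ is. One must carry the vanishing hypotheses down to $\bar R$; since only finitely many Ext/Tor modules are assumed to vanish this is not automatic, but it can be arranged by choosing the $x_i$ to avoid the associated primes of the relevant higher Ext modules (cf.\ the reductions in \cite{HH}). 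So assume from now on that $R$ is Artinian with $a:=\type R\le 2$; we may take $a=2$, so that $\omega$ is non-free of infinite projective dimension and, once $\Tor^R_2(\omega,\omega)=0$ is in force, Remark \ref{summands} prevents $k$ from being a summand of $\omega$ or of its syzygies. We seek a contradiction.

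In the Artinian case part (2) reduces to part (1): by \ref{duality}, $\Ext^i_R(\omega,R)\cong\Tor^R_i(\omega,\omega)^\vee$, so the hypothesis of (2) gives $\Tor^R_2(\omega,\omega)=0$ (the hypothesis of (1)) together with the bonus $\Tor^R_1(\omega,\omega)=0$ --- which, via the Asashiba--Hoshino diagram \ref{maps} with $M=\omega$ exactly as in the proof of Theorem \ref{omega}(2), amounts to surjectivity of $\beta\col\omega\to\omega_1^*$, equivalently injectivity of $\beta^\vee\col\omega_1\otimes_R\omega\to R$. Hence it suffices to prove (1); this is genuinely harder than the corresponding parts of Theorem \ref{omega}, since here only a single Tor module is assumed to vanish, and it is at this point that $a=2$ must be exploited fully. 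If $\fm^2=0$, Remark \ref{m^2} applied to $M=N=\omega$ already forces $\omega$ free, contradicting $a=2$; so assume $\fm^2\ne0$.

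Now take the minimal presentation $0\to\omega_1\xrightarrow{\varphi}R^2\xrightarrow{\psi}\omega\to0$ (note $\nu(\omega)=\dim_k\Soc R=2$) and feed the faithful module $M=\omega$ into Lemma \ref{maps}, obtaining $\alpha\col\omega_1\hookrightarrow\omega^*$ (injective), $\theta\col R^2\to(R^2)^*$ and $\beta\col\omega\to\omega_1^*$. I would then run length bookkeeping in the spirit of the proofs of Theorem \ref{omega}(2)--(3), using: $\lambda(\omega)=\lambda(\omega_1)=\lambda(R)$; $\lambda(\omega^*)=\lambda(\omega\otimes_R\omega)$ (from $\Hom_R(\omega\otimes_R\omega,\omega)\cong\omega^*$); $\dim_k\Soc\omega=1$ and $\dim_k\Soc\omega^*=\dim_k\Hom_R(\omega,\Soc R)=a\cdot\nu(\omega)=4$; and $\nu(\omega_1\otimes_R\omega)=2\,b_1(\omega)$. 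The equivalent form $\Tor^R_1(\omega_1,\omega)=0$ of the hypothesis makes short exact sequences of the form $0\to(-)\otimes_R\omega\to\omega^{(-)}\to(-)\otimes_R\omega\to0$ exact, and length counts in these, combined with $\lambda(\omega\otimes_R\omega)=\lambda(\omega^*)\ge\lambda(\omega_1)=\lambda(R)$ (injectivity of $\alpha$) and the socle-dimension data, should squeeze the relevant lengths hard enough to force one of $\omega\otimes_R\omega$, $\omega_1\otimes_R\omega$, $\omega$ to be cyclic --- the contradiction, just as in the proofs of Theorem \ref{omega}(2),(3) under $\fm^3=0$.

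The main obstacle is exactly this last computation in the case $\fm^2\ne0$: here we cannot use Lescot's Betti-number relations (\ref{properties}) or the explicit length formulas of \ref{computations}, so every inequality has to come from just the three ingredients --- the tensored short exact sequences, the Asashiba--Hoshino ladder, and Matlis duality --- and one must check that with $a=2$, and assuming only one Tor module to vanish, these suffice to pin the lengths down and force cyclicity. Making the reduction to the Artinian case fully rigorous, given that only two Ext (resp.\ one Tor) modules are assumed to vanish, is a secondary technical point.
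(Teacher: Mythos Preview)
Your proposal is incomplete, as you yourself acknowledge: the ``main obstacle'' --- the length bookkeeping that is supposed to force cyclicity from a single vanishing Tor --- is stated as a hope rather than carried out, and without the structural constraints supplied by $\fm^3=0$ there is no evident reason the inequalities should close up. The Asashiba--Hoshino ladder and Matlis duality give you an injection $\omega_1\hookrightarrow\omega^*$ and the identity $\lambda(\omega^*)=\lambda(\omega\otimes_R\omega)$, but these by themselves do not bound $\lambda(\omega^*)$ from above in any useful way when $\ell\ell(R)$ is unrestricted. Your reduction to the Artinian case is also genuinely delicate: with only $\Tor_2^R(\omega,\omega)=0$ assumed, descending this single vanishing along a regular sequence is not automatic.

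The paper's proof avoids both difficulties by a different and much shorter device. It never reduces to the Artinian case. The key is a purely algebraic fact (Proposition~\ref{tor2}): given a short exact sequence $0\to N\xrightarrow{\varphi}R^2\to M\to 0$ with $M$ faithful and $\Tor_2^R(M,M)=0$, one has $\nu(N)\le 1$. The argument is that faithfulness of $M$ forces $\bw^2_R(\varphi)=0$ (\ref{wedge}), while $\Tor_2^R(M,M)=\Tor_1^R(M,N)=0$ makes $\varphi\otimes_R\varphi$ injective; chasing the natural map $\iota^L\colon\bw^2_R(L)\to L\otimes_R L$, $x\wedge y\mapsto x\otimes y-y\otimes x$, through the obvious square then gives $\iota^N=0$, hence $\iota^{N\otimes_Rk}=0$, hence $\dim_k(N\otimes_Rk)\le 1$. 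Applying this with $M=\omega$ and $N=\omega_1$ shows $\omega_1$ is cyclic. One then stays in the Cohen--Macaulay setting and uses \emph{multiplicity} (additive on short exact sequences of maximal Cohen--Macaulay modules) rather than length: $e(\omega_1)=2e(R)-e(\omega)=e(R)$, so the surjection $R\twoheadrightarrow\omega_1$ has kernel of multiplicity zero and $\omega_1$ is free, a contradiction. Part~(2) is reduced to part~(1) directly in the Cohen--Macaulay setting via the citation \cite[B.4]{ABS}, which shows $\Ext^i_R(\omega,R)=0$ for $i=1,2$ implies $\Tor_i^R(\omega,\omega)=0$ for $i=1,2$ without passing to an Artinian quotient.
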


The proof will be given at the end of the section, after discussing
some preliminaries. We recall a well-known fact:

\begin{chunk}
\label{wedge}
  Let $R$ be a commutative local ring and consider a short exact
  sequence of $R$-modules:
\[
0\lra N\xrightarrow{\ \varphi\ } R^n\xrightarrow {\ \psi\ } M\lra 0
\]

The map $\varphi$ induces a natural map $\bw^n_R (\varphi)\col
\bw^n_R(N)\to \bw^n_R(R^n)$. If $a\in R$ is in the image of this map,
via the identification of $R$ with $\bw^n_R(R^n)$, then $aM=0$.

In particular, if $M$ is faithful, then $\bw^n_R(\varphi)=0$.
\end{chunk}

\begin{proposition}
\label{tor2}
  Consider a short exact sequence of $R$-modules
\[
0\lra N\xrightarrow{\ \varphi\ } R^2\xrightarrow {\ \psi\ } M\lra 0
\]

If $\Tor_2^R(M,M)=0$ and $M$ is faithful, then $\nu(N)\le 1$.
\end{proposition}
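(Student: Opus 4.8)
The plan is to apply the tensored long exact sequence and the wedge-product obstruction from \ref{wedge} to force $N$ to be cyclic. Starting from the short exact sequence $0\to N\xrightarrow{\varphi} R^2\xrightarrow{\psi} M\to 0$, I would first tensor it with $M$; since $\Tor_2^R(M,M)=0$, we obtain the four-term exact sequence
\[
0\lra \Tor_1^R(M,M)\lra N\otimes_RM\xrightarrow{\ \varphi\otimes M\ } M^2\lra M\otimes_RM\lra 0.
\]
The key point is to understand the map $\varphi\otimes_RM$. Because $M$ is faithful, \ref{wedge} applies to the sequence $0\to N\to R^2\to M\to 0$ with $n=2$: the induced map $\bw^2_R(\varphi)\col\bw^2_R(N)\to\bw^2_R(R^2)\cong R$ is zero. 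I would leverage this to show that the image of $\varphi$ cannot contain two elements of $R^2$ that generate a rank-two free submodule ``up to $M$-torsion''; more precisely, any two elements in the image of $\varphi$ have all their $2\times 2$ minors annihilating $M$, hence equal to zero by faithfulness.

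Next I would argue as follows. Suppose, for contradiction, that $\nu(N)\ge 2$, so there exist $u_1,u_2\in N$ whose images in $N/\fm N$ are linearly independent over $k$. Then $\varphi(u_1),\varphi(u_2)\in R^2$ are part of a minimal generating set of the submodule $\varphi(N)\subseteq R^2$. The vanishing $\bw^2_R(\varphi)=0$ forces $\varphi(u_1)\wedge\varphi(u_2)=0$ in $\bw^2_R(R^2)\cong R$, i.e.\ the determinant of the $2\times2$ matrix with columns $\varphi(u_1),\varphi(u_2)$ is zero. Over a local ring this says the two columns are linearly dependent, and since $\varphi$ is injective with $\varphi(u_1),\varphi(u_2)$ minimal, this should produce a relation forcing one of $u_1,u_2$ into $\fm N$ — contradicting minimal independence. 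Care is needed because a vanishing determinant over a (non-domain) local ring does not instantly give a linear dependence among the columns; one must use that $R^2$ is free and that $\varphi$ is a monomorphism to rule out zero-divisor pathologies, perhaps by passing to a minimal prime or invoking that the rank of $\varphi(N)$ equals $\nu(N)$ when $N\hookrightarrow R^2$.

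The main obstacle I anticipate is precisely this last linear-algebra step: translating ``$\det=0$ in a local ring'' into ``the corresponding syzygy elements are not minimal generators.'' The cleanest route is probably to combine the exactness of the four-term sequence above with a rank/length bookkeeping: $\varphi\otimes_RM$ has image of rank at most one (by the wedge obstruction applied after tensoring, or by localizing at a minimal prime of $R$ where $M$ becomes free of positive rank and $\varphi$ becomes an injection $N_\mathfrak p\to R_\mathfrak p^2$ with torsion-free cokernel, forcing $\operatorname{rank}N_\mathfrak p\le 1$), and then using Nakayama to conclude $\nu(N)=\operatorname{rank}N\le1$. Alternatively, one can feed the commutative diagram of \ref{maps} (available since $M$ is faithful) into the computation, as the authors do elsewhere, to compare $N$ with a submodule of $M^*$ and bound its number of generators. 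I would pursue the localization argument first, as it most directly exploits faithfulness and sidesteps the zero-divisor difficulty.
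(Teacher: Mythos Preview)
You correctly isolate the two ingredients---the wedge obstruction $\bw^2_R(\varphi)=0$ from \ref{wedge} and the Tor vanishing---but the bridge you propose between them does not hold. The claim that $\nu(N)=\operatorname{rank}N$ for a submodule $N\hookrightarrow R^2$ is false: take $R$ a two-dimensional regular local ring and $N=\fm\hookrightarrow R\hookrightarrow R^2$; then $\operatorname{rank}N=1$ while $\nu(N)=2$. Localizing at a minimal prime only bounds the rank of $N$, and Nakayama cannot convert rank into $\nu$. Your first approach (vanishing $2\times2$ determinant forces a column relation) fails for exactly the reason you name, and neither of your suggested repairs rescues it. The appeal to \ref{maps} is also off target: that result concerns a comparison with the dual sequence and does not by itself bound $\nu(N)$.

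The missing idea is to use the Tor hypothesis to make the wedge obstruction hold \emph{inside $N$}, not just after mapping to $R^2$. From $\Tor_2^R(M,M)=0$ one gets $\Tor_1^R(M,N)=0$, so $\varphi\otimes_RN\colon N\otimes_RN\to R^2\otimes_RN$ is injective; composing with the injective $R^2\otimes_R\varphi$ shows $\varphi\otimes_R\varphi$ is injective. Now introduce the antisymmetrization $\iota^L\colon\bw^2_R(L)\to L\otimes_RL$, $x\wedge y\mapsto x\otimes y-y\otimes x$. The square
\[
\xymatrix{
\bw^2_R(N)\ar[r]^{\iota^N}\ar[d]_{\bw^2_R(\varphi)=0} & N\otimes_RN\ar@{^{(}->}[d]^{\varphi\otimes\varphi}\\
\bw^2_R(R^2)\ar[r]^{\iota^{R^2}} & R^2\otimes_RR^2
}
\]
commutes, and since the right vertical is injective while the left vertical is zero, you conclude $\iota^N=0$. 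Tensoring with $k$ identifies $\iota^N\otimes_Rk$ with $\iota^{N\otimes_Rk}$; over the field $k$ the antisymmetrization is injective, so $\bw^2_k(N/\fm N)=0$, i.e.\ $\dim_k N/\fm N\le 1$, and Nakayama finishes. The point is that injectivity of $\varphi\otimes\varphi$---which is precisely where $\Tor_2^R(M,M)=0$ enters---lets you pull the vanishing of $\bw^2(\varphi)$ back to $\iota^N=0$, after which the passage to $k$ is a genuine linear-algebra statement with no zero-divisor issues.
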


\begin{proof} For any module $L$ we define a map $\iota^L\col
\bw^2_R(L)\to L\otimes_RL$ given by
\[
\iota^L(x\wedge y)=x\otimes y-y\otimes x
\]

The hypothesis implies $\Tor_1^R(M,N)=0$. It follows that the induced
map
\[
\varphi\otimes_R N\col N\otimes_RN\to R^2\otimes_RN
\]
is injective.  The map $\varphi\otimes_R\varphi\col N\otimes_RN\to
R^2\otimes_RR^2$ is the composition
\[
N\otimes_RN\xrightarrow{\varphi\otimes_R
N}R^2\otimes_RN\xrightarrow{R^2\otimes_R\varphi}R^2\otimes_RR^2
\]
Both maps are injective, hence so is $\varphi\otimes_R\varphi$.

Recall from \ref{wedge} that $\bw^2_R(\varphi)=0$. The commutative
diagram
\[
\xymatrixrowsep{2.2pc} \xymatrixcolsep{3.4pc}
\xymatrix{
\bw^2_R(N)\ar@{->}[r]^{\iota^N}\ar@{->}[d]_{\bw^2_R(\varphi)}&N\otimes_RN\ar@{^{(}->}[d]^{\varphi\otimes_R\varphi}\\
\bw^2_R(R^2)\ar@{->}[r]^{\iota^R}&R^2\otimes_RR^2
}
\]
then yields $\iota^N=0$. In particular, the map $\iota^N\otimes_Rk$ is
zero.  Note that this map can be identified with
$\iota^{N\otimes_Rk}$. As $N\otimes_Rk$ is a finite direct sum of
copies of $k$, the map $\iota^{N\otimes_Rk}$ is clearly injective. It
follows that $\bw^2_R(N\otimes_Rk)=0$, hence $N\otimes_Rk\cong
k$. Nakayama's Lemma then shows that $N$ is cyclic.
 \end{proof}

\begin{proof}[Proof of Theorem {\rm \ref{tor22}}]
  (1) Assume that $R$ is not Gorenstein, hence $\type(R)=2$. Set
  $N=\omega_1$. We denote $e(L)$ the multiplicity of an $R$-module
  $L$. Since $e(\omega)=e(R)$, we use the fact that multiplicity is
  additive on short exact sequences of maximal Cohen-Macaulay modules
  to obtain $e(N)=e(R)$. By Proposition \ref{tor2}, $N$ is cyclic,
  hence there is a surjection $R\to N$. If $K$ is the kernel of this
  map, then $e(K)=0$, hence $N$ is free, a contradiction.

(2) By \cite[B.4]{ABS} we have $\Tor_i^R(\omega,\omega)=0$ for $i=1,2$
    so we can apply (1).
\end{proof}

\end{document}